\newtheorem{Thm}{Theorem}[section]
\theoremstyle{definition}
\newtheorem{Theorem}[Thm]{Theorem}
\newtheorem{Lemma}[Thm]{Lemma}
\newtheorem{Corollary}[Thm]{Corollary}
\newtheorem{Proposition}[Thm]{Proposition}
\theoremstyle{remark}
\newtheorem{Remark}{Remark}
\font\ym=msbm10
\newcommand{\R}{\text{\ym R}}
\newcommand{\C}{\text{\ym C}}
\newcommand{\sA}{\mathscr A}
\newcommand{\sB}{\mathscr B}
\newcommand{\sC}{\mathscr C}
\newcommand{\sH}{\mathscr H}
\newcommand{\sR}{\mathscr R}
\newcommand{\Hom}{\hbox{\rm Hom}}
\newcommand{\End}{\hbox{\rm End}}
\title[radial representation]{On the primitive ideal space of radial representations of free groups}
\author{Yamagami Shigeru}
\begin{document}
\maketitle
\begin{center}
Graduate School of Mathematics
\end{center}
\begin{center}
Nagoya University 
\end{center}
\begin{center} 
Nagoya, 464-8602, JAPAN
\end{center} 

\subjclass{Primary 22D25; Secondary 46L45}

\keywords{primitive ideal, free group, radial function}

\begin{abstract}
  Radial representations of finitely generated free groups are studied. The associated C*-algebra is located
  between the reduced and full group C*-algebras and its primitive ideal space is described concretely as a topological space.
%
\end{abstract}

\section*{Introduction}
Spherical functions on finitely generated free groups have been investigated from various points of view.
Among them, fundamental is the framework of Poisson boundaries, which allows us to take analogies with the case of
semisimple Lie groups. In fact, studies of spherical functions go more or less around it and, as their basic properties,
irreducibility and inequivalence of the associated spherical representations are established under this background. 

Other than spherical representations, a series of non-irreducible representations are 
associated with radial functions.
Within that class interesting are positive definite functions of Haagerup, which include the standard trace as a limit case.
For non-irreducible representations associated with positive definite radial functions, a generalization of Plancherel formula
is described in \cite{Y} via spectral decomposition of radial functions based on the radial algebra which is commutative.
Notice here that, different from the semisimple case, non-commutative free groups are not type I and the uniqueness of
decomposition into irreducible representations breaks down.

In other words, to obtain a Plancherel formula, we need to specify a maximal commutative subalgebra first
and the above-mentioned Plancherel formula is based on the algebra of radial functions. 
As an instance, the standard trace belongs to the radial class and its Plancherel measure turns out to be the so-called
Kesten measure, which is supported by the regular spectrum and equivalent to the Lebesgue measure. 

In this paper, we look into the primitive ideal space of radial representations and show that it is given by primitive ideals of spherical
representations. Furthermore, its hull-kernel topology is described explicitly in terms of the spectral parameter
which distinguishes equivalence classes of spherical representations.

During the process of description, we also see that 
 the kernel of the regular representation is represented by a CCR algebra
 under the universal radial representation and a complete description of
 the primitive ideal space of the radial representation is obtained in terms of primitive ideals of this CCR algebra.

 The author is grateful to Tomohiro Hayashi for many discussions on the subject. 

\section{Background Review}
 We shall work with the countable discrete group $G$ generated freely by a finite set $\{ s_1,\dots,s_l\}$ and 
 representations of $G$ by bounded linear operators on a Hilbert space $\sH$.
 By freeness, such a representation is specified by assigning a finite family $(S_i)_{1 \leq i \leq l}$ of bounded invertible operators
 on $\sH$ so that a unitary representation corresponds to a family of unitary operators.

 Let $\C G = \sum_{g \in G} \C g$ be the algebraic group algebra of $G$, which is naturally identified with the convolution algebra
 of functions on $G$ having finite supports. The completion
 $\ell^1(G)$ of $\C G$ with respect to the $\ell^1$-norm consists of summable sequences labeled by
 elements in $G$ so that the convolution product makes $\ell^1(G)$ a unital Banach algebra.

 The *-operation $g \mapsto g^* = g^{-1}$ on $\C G$ is obviously extended to $\ell^1(G)$ isometrically.
 Thus unitary representations of $G$ are in one-to-one correspondence with *-representations of $\C G$ or $\ell^1(G)$ on Hilbert spaces.
 The associated universal C*-algebra is called the group C*-algebra of $G$ and denoted by $C^*(G)$.
 The \textbf{parity automorphism} $\varpi$ of $C^*(G)$ is defined by $\varpi(g) = (-1)^{|g|}g$,
 where $|g|$ denotes the length of $g \in G$ with respect
 to the generator $\{ s_1,\dots,s_l\}$. 

 Among *-representations, the regular representation on $\ell^2(G)$ plays a central role in what follows:
 the left regular representation is denoted by $\lambda$.
 The image $\lambda(C^*(G))$, also denoted by  $C^*_r(G)$, is the reduced group C*-algebra and known to be simple (\cite{Po75}). 
 Recall that the regular representation is related with the standard trace $\tau$ on $C^*(G)$ specified by
 $\tau(g) = \delta_{g,e}$ for $g \in \C G \subset C^*(G)$: Let $\tau^{1/2}$ be the GNS vector of $\tau$.
 Then, for $f = \sum_{g \in G} f(g) g \in \C G$, 
 $f\tau^{1/2} = \tau^{1/2} f$ is identified with the function $f \in \ell^2(G)$ so that
 $\lambda(f)$ is realized by left multiplication of $f$ on the GNS representation space of $\tau$. 

 The parity automorphism $\varpi$ is implemented on $C^*_r(G)$ by the parity operator $\Pi$ on $\ell^2(G)$ defined by
 $\Pi(g\tau^{1/2}) = \varpi(g)\tau^{1/2}$. 

We shall now review (more or less well-known) relevant facts on spherical representations of free groups together with some comments.

 Spherical functions on free groups are introduced and studied as counterparts of the semisimple case
 (see \cite{Ca, FP, MZ83, PS} for example).

 A function of $g \in G$ is said to be \textbf{radial} if it depends only through the length $|g|$.
 Radial functions supported by finite sets constitute a commutative *-subalgebra $\sA$ of $\C G$.
 Let $G_n = \{ g \in G; |g| = n\}$ with its indicator function denoted by $1_{G_n}$. Note that the number of elements in $G_n$ is
 $|G_n| = 2l(2l-1)^{n-1}$ ($n \geq 1$). 
 As a linear base of $\sA$, elementary radial functions are introduced by
 \[
   h_n = \frac{1}{|G_n|} 1_{G_n}
   \quad
   (n = 0,1,2,\dots),  
 \]
 which are hermitian $h_n^* = h_n$ with $h_0$ the unit element of $\C G$ and fulfill the recurrence relation
 \[
   h_1h_n = rh_{n-1} + (1-r) h_{n+1}
   \quad
   (n \geq 1)
 \]
 with $r = 1/2l$ satisfying $0 < r \leq 1/2$.
 
 If we introduce a polynomial sequence $p_n(t)$ of indeterminate $t$ by
 $p_0(t) = 1$, $p_1(t) = t$ and
 $t p_n(t) = r p_{n-1}(t) + (1-r) p_{n+1}(t)$ ($n \geq 1$), then $h_n = p_n(h_1)$ and
 \[
   \sum_{n=0}^\infty p_n(t) z^n = \frac{1-r - rtz}{1-r - tz + rz^2}.
 \]

 The spectrum of $\lambda(h_1)$ is exactly
 \[
   \sigma_r = [-2\sqrt{r(1-r)},2\sqrt{r(1-r)}]
 \]
 (called the \textbf{regular spectrum}) without eigenvalues inside 
 (\cite{Ke, Co}).
 

 Let $E: \C G \to \sA$ be the averaging map defined linearly by $E(g) = h_{|g|}$ ($g \in G \subset \C G$), 
which satisfies algebraic properties of conditional expectation:
$E(a) = a$, $E(f^*) = E(f)^*$ and $E(af) = aE(f)$ for $a \in \sA$ and $f \in \C G$.
A radial function $\phi$ is then in one-to-one correspondence with a linear functional $\varphi$ of $\sA$ by the relation
$\phi(g) = \varphi\circ E(g)$ ($g \in G$).
The averaging map $E$ is also characterized by the equality $\tau(f\phi) = \tau(E(f)\phi)$ for $f \in \C G$ and
a radial function $\phi$.

Given a complex number $c \in \C$, let $\delta_c$ be the multiplicative linear functional of $\sA$ evaluated at $c$, i.e.,
$\delta_c(h_n) = p_n(c)$ ($n \geq 0$). 
When $\varphi = \delta_c$, the associated radial function $\varepsilon_c$ is called a \textbf{spherical function} with $c$ referred to
as the \textbf{spectral parameter}. 


A multiplicative functional $\delta_c$ of spectral parameter $c \in \C$ is $\ell^1$-bounded if and only if
$c = a + ib$ ($a,b \in \R$) is in the elliptic disk
 \[
   a^2 + \frac{b^2}{(1-2r)^2} \leq 1
 \]
 (when $r = 1/2$, this shrinks to the interval $[-1,1] \subset \R$) and $\delta_c$ is C*-bounded if and only if $-1 \leq c \leq 1$ (\cite{Ca, Py81}).
 Notice that C*-bounded multiplicative functionals are automatically *-preserving.

 With this spectral property of $c$ in hand, we see that a spherical function $\varepsilon_c$ of spectral parameter $c$ is positive definite
 if and only if $-1 \leq c \leq 1$.

 Let $A$ be the closure of $\sA$ in the full group C*-algebra $C^*(G)$.
 Then, by Haagerup (see \cite{S90, Y}),
 $E$ is extended to a conditional expectation (also denoted by $E$) of $C^*(G)$ onto $A \subset C^*(G)$. 


 Since $E$ preserves positivity, any positive functional $\varphi$ of $A$ induces a positive functional $\varphi\circ E$ of $C^*(G)$. 
 If we take an evaluation functional $\delta_t$ on $A$ at $t \in [-1,1]$ as a $\varphi$,
 it gives rise to a spherical state on $C^*(G)$,
 which is also denoted by $\varepsilon_t$, i.e., $\varepsilon_t = \delta_t\circ E$ and the spectrum of $A$ is identified with
 $[-1,1]$, i.e., $A \cong C([-1,1])$ ($C(K)$ standing for the continuous function algebra of a compact set $K$). 
 
 Remark that the spherical state $\varepsilon_t$ is the obvious extension of
 a positive definite spherical function of spectral parameter $-1 \leq t \leq 1$ 
 with $\epsilon_{\pm 1}(g) = (\pm 1)^{|g|}$ multiplicative on $G$, i.e., the trivial/parity character (one-dimensional representation) of $G$.

 The GNS-representation of $\varepsilon_t$ is called a \textbf{spherical representation}.
 
 \begin{Theorem}[\cite{FP, MZ87, S90}] 
 Spherical representations are irreducible and mutually disjoint for different $t \in [-1,1]$. 
 \end{Theorem}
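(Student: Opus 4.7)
The plan is to derive both assertions from a single uniqueness statement inside each $\sH_c$: the GNS cyclic vector $\xi_c$ is, up to scalar, the only vector $\eta\in\sH_c$ satisfying $\pi_c(a)\eta=\delta_c(a)\eta$ for every $a\in A$. That $\xi_c$ enjoys this property follows from a direct GNS norm computation using $\epsilon_c(a^*a)=\delta_c(a^*a)=|\delta_c(a)|^2$.

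From this uniqueness, both conclusions are formal. For irreducibility, any self-adjoint projection $P\in\pi_c(C^*(G))'$ commutes with $\pi_c(A)$, so $P\xi_c$ lies in the one-dimensional $\delta_c$-eigenspace and equals $\alpha\xi_c$ with $\alpha\in\{0,1\}$; cyclicity of $\xi_c$ then propagates $P=\alpha I$ on all of $\sH_c$. For disjointness at $c\neq c'$, any intertwiner $T\colon\sH_c\to\sH_{c'}$ carries $\xi_c$ to a $\pi_{c'}(A)$-eigenvector of character $\delta_c$, while the uniqueness statement applied to $\pi_{c'}$ permits only character $\delta_{c'}$; the two characters disagree on $h_1$ (evaluating to $c$ and $c'$ respectively), forcing $T\xi_c=0$ and hence $T=0$ by cyclicity.

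The substance lies in the uniqueness claim. Given a candidate $\eta$ with the eigenvalue property and $\|\eta\|=1$, I would examine the positive definite matrix coefficient $F(g)=\langle\eta,\pi_c(g)\eta\rangle$ on $G$. The eigenvalue conditions propagate to the $A$-biinvariance $\langle\eta,\pi_c(afb)\eta\rangle=\delta_c(a)\delta_c(b)\langle\eta,\pi_c(f)\eta\rangle$ for $a,b\in A$ and $f\in C^*(G)$; in particular, the averages $\sum_{g\in G_n}F(g)=|G_n|p_n(c)$ are prescribed, and $F$ satisfies a discrete harmonic-type recursion averaged over $G_1$ on both sides. The target is the radial identity $F(g)=p_{|g|}(c)=\phi_c(g)$; once secured, $\eta$ becomes a cyclic GNS vector for the state $\epsilon_c$, and uniqueness of the GNS construction places $\eta\in\C\xi_c$.

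The radiality of $F$ is the genuine obstacle. I would attack it via an explicit realization of $\pi_c$ on an $L^2$-space over the boundary of the Cayley tree, following \cite{FP, MZ87}: there the spherical vector is the constant function and any other $\delta_c$-eigenvector is forced to be constant by transitivity of the stabilizer action on end-configurations of fixed combinatorial type. A self-contained alternative is to invoke the spectral description of $\pi_c(A)''$ coming from the radial Plancherel framework of \cite{Y}, in which the spectral measure associated to $\xi_c$ is a point mass at $c$, leaving no room for a second linearly independent $\delta_c$-eigenvector.
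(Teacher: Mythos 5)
This theorem is not proved in the paper at all: it is quoted from \cite{FP, MZ87, S90}, so there is no in-paper argument to compare yours with. Your reduction is the standard one from those references: once one knows that, up to scalars, $\xi_c$ is the \emph{only} $\pi_c(A)$-eigenvector in $\sH_c$, irreducibility and disjointness follow by the formal commutant/intertwiner arguments you give. Two points, however, keep the proposal from being a proof.

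First, a logical mismatch in the disjointness step. You formulate the uniqueness claim only for the character $\delta_{c'}$ (``the only $\eta$ with $\pi_{c'}(a)\eta=\delta_{c'}(a)\eta$''), but then use it to exclude an eigenvector of the \emph{different} character $\delta_c$ inside $\sH_{c'}$. What disjointness needs is the stronger statement that $\sH_{c'}$ contains no nonzero $A$-eigenvector of \emph{any} multiplicative character other than $\delta_{c'}$; equivalently, that $c$ is not an eigenvalue of $\pi_{c'}(h_1)$ for $c\neq c'$. This is harmless if your uniqueness proof actually delivers the stronger form, but as written the deduction does not follow from the claim you state.

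Second, and more seriously, the uniqueness claim itself --- which you correctly identify as the entire substance --- is not established. Both routes you offer defer it: the boundary realization is precisely the content of \cite{FP, MZ87}, and the ``spectral'' alternative is insufficient as stated. Knowing that the spectral measure of $\xi_c$ is a point mass at $c$ only says $\xi_c$ is an eigenvector; to exclude a second linearly independent eigenvector you need a multiplicity-one statement, i.e.\ that $\pi_c(h_1)$ restricted to $\varsigma^\perp$ has no eigenvalue at the relevant point. For $c\notin\sigma_r$ this follows from the localization of the spectrum of $\pi_c(h_1)|_{\varsigma^\perp}$ inside $\sigma_r$ (Theorem~\ref{SP}), but for $c$ in the interior of $\sigma_r$ the continuous spectrum covers $c$ and one must additionally rule out point spectrum there --- this is exactly where the hard analysis of the principal series lives, and it is the case your sketch does not touch. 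So the proposal is a correct \emph{outline} of the classical argument, but the theorem's actual content remains uncited-in rather than proved.
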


 Spherical representations of spectral parameter in $\sigma_r$ appear as irreducible components
 of the regular representation in a form of Plancherel formula 
  and are referred to as being \textbf{principal}, whereas ones parametrized by $[-1,1] \setminus \sigma_r$ are said
  to be \textbf{complementary}. 

 As it will be reviewed below, these series of representations (except for the residual values $t = \pm 1$) are realized as
 an analytic family of representations on a single Hilbert space.
 
 We now introduce another analytic family $(\lambda_z)$ of representations on $\ell^2(G)$ due to Pytlik and Szwarc.
 (The original notation is changed to $\lambda_z$ in view of the fact that this is a deformation of $\lambda$.)
 Here are basic properties: $(\lambda_z)$ is a family of bounded representations of $G$ on $\ell^2(G)$ parametrized by
 a complex number $z$ satisfying $z^2 \not\in (1,\infty)$.

 \begin{Theorem}[\cite{PS}]~ 
 \begin{enumerate}
 \item
   For each $g \in G$, $\lambda_z(g)$ is a finite-rank perturbation of $\lambda(g)$: 
   there is a finite dimensional subspace $L_g$ of $\ell^2(G)$ satisfying
   $\lambda_z(g) = \lambda(g)$ on $L_g^\perp$ for each $z$.
 \item
   For any $g \in G$, $\lambda_z(g)$ is continuous in $z$ and holomorphic on $z \in \C \setminus ((-\infty,-1] \sqcup [1,\infty))$.
 \item
   The equality $\lambda_z(g)^* = \lambda_{\overline{z}}(g^{-1})$ holds for any $g \in G$ and any $z$.
   Consequently $\lambda_z$ is unitary if $z$ is real, i.e., if $z \in [-1,1]$.
 \item
   For $z^2 \not\in [1,\infty)$, $\tau^{1/2}$ is a cyclic vector of $\lambda_z$ and satisfies
   \[
     (\tau^{1/2}|\lambda_z(g)\tau^{1/2}) = z^{|g|}
     \quad
     (g \in G). 
   \]
   In particular, the function $z^{|g|}$ is positive definite for $-1 \leq z \leq 1$ (\cite{Ha}) and $\lambda_0 = \lambda$. 
    \item 
  Limits $\lambda_{\pm 1} = \lim_{z \to \pm 1} \lambda_z$ are unitary representations of $G$ on $\ell^2(G)$ 
  satisfying
  \[
    \lambda_{\pm 1}(a) g\tau^{1/2} =
    \begin{cases}
      \pm \tau^{1/2} &(g = e)\\
      \mp a\tau^{1/2} &(g = a^{-1})\\
      ag\tau^{1/2} &\text{otherwise}
    \end{cases}
  \]
  for $a \in G_1$ and $g \in G$.
 \end{enumerate}
\end{Theorem}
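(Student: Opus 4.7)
The plan is to construct the family $\lambda_z$ explicitly on the generators $s_1,\dots,s_l$ and extend it multiplicatively. Because $G$ is freely generated, any choice of bounded invertible operators on $\ell^2(G)$ assigned to the generators extends uniquely to a representation of $G$; the task therefore reduces to defining operators $\lambda_z(a)$ for $a\in G_1$ which depend holomorphically on $z$ on the cut plane, satisfy the adjoint identity on generators, and produce the matrix coefficient $(\delta_e\mid \lambda_z(a)\delta_e)=z$.

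For the construction on a single generator $a$, I would exploit the tree structure of $G$. The elements of $G$ split into $\{e\}$, the reduced words starting with $a^{-1}$, and the rest; the shift $\lambda(a)$ acts as an isometry that is almost trivial outside a small subspace near $\delta_e$ and $\delta_{a^{-1}}$. I would define $\lambda_z(a)$ by keeping the shift on the appropriate orthogonal complement and replacing its action on a two-dimensional subspace by a matrix block depending holomorphically on $z$. The block is forced, up to normalization, by the requirements $(\delta_e\mid \lambda_z(a)\delta_e)=z$ and invertibility precisely on the complement of $[1,\infty)$ in the $z^2$-plane.

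With this construction, properties (i)--(iii) are routine. For (i) one expands $\lambda_z(g)-\lambda(g)$ as a telescoping sum in which every term contains a finite-rank factor, so the difference has finite rank bounded by $|g|$ times the rank of the perturbation on a single generator. Property (ii) follows from the holomorphic dependence of the defining matrix block. Property (iii) is checked on generators by inspection and extended multiplicatively, giving unitarity for $z\in[-1,1]$. Property (iv) splits into two parts: the matrix-coefficient identity is proved by induction on $|g|$, writing $g=ag'$ with the first letter of $g'$ different from $a^{-1}$ and using the matrix block to reduce $(\delta_e\mid \lambda_z(g)\delta_e)$ to $z\cdot(\delta_e\mid \lambda_z(g')\delta_e)=z^{|g|}$; cyclicity of $\tau^{1/2}$ under $\lambda_z$ follows from the finite-rank perturbation together with the invertibility of $\lambda_z(a)$, which lets one reconstruct the basis vectors $\delta_g$ inductively.

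The main obstacle is choosing the matrix block so that (iii) and (iv) hold together. Its coefficients must simultaneously make the operator invertible off $z^2\in[1,\infty)$, make its adjoint match $\lambda_{\bar z}(a^{-1})$, and make the induction in (iv) close; these constraints essentially determine the block, and the singular set $z^2\in[1,\infty)$ emerges as the locus where invertibility breaks down. Part (v) is then a direct limit computation: as $z\to\pm 1$ the matrix block degenerates to a bounded operator, and expanding its action on basis vectors $g\tau^{1/2}$ reproduces the stated piecewise formulas.
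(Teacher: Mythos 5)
The paper offers no proof of this statement: it is quoted from Pytlik--Szwarc \cite{PS} as background, so there is no internal argument to measure yours against. Your route --- define $\lambda_z$ on each generator $a$ as a perturbation of $\lambda(a)$ supported on the two-dimensional subspace spanned by $\tau^{1/2}$ and $a^{-1}\tau^{1/2}$, extend multiplicatively by freeness, and then get (i) by telescoping, (iii) by checking generators, (iv) by induction on word length --- is a viable way to establish all five items, and the telescoping bound $\mathrm{rank}\bigl(\lambda_z(g)-\lambda(g)\bigr)\leq 2|g|$ with a $z$-independent $L_g$ does come out correctly.

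The genuine gap is that the whole content of the theorem sits inside the $2\times 2$ block, and you never exhibit it; you only assert that the constraints ``essentially determine'' it. They do, but one must write it down and verify that the constraints are simultaneously satisfiable: in the ordered bases $(\tau^{1/2},a^{-1}\tau^{1/2})$ and $(\tau^{1/2},a\tau^{1/2})$ the block $B_z=\left(\begin{smallmatrix} z & \sqrt{1-z^2}\\ \sqrt{1-z^2} & -z\end{smallmatrix}\right)$, with the principal branch of $\sqrt{1-z^2}$, does everything at once ($B_0$ is the flip, $B_z$ is a real orthogonal reflection for $z\in[-1,1]$, $B_z^{*}=B_{\bar z}=B_{\bar z}^{-1}$ gives (iii), and $B_{\pm1}=\mathrm{diag}(\pm1,\mp1)$ gives (v)). Once the block is explicit, your diagnosis of the singular set is seen to be wrong: $\det B_z\equiv -1$, so invertibility \emph{never} breaks down; what fails on $z^2\in[1,\infty)$ is single-valued holomorphy of $\sqrt{1-z^2}$ (hence (ii) and, via $\overline{\sqrt{1-z^2}}=\sqrt{1-\bar z^{\,2}}$, the adjoint relation (iii)), and what fails at $z^2=1$ is only cyclicity, because the off-diagonal entry vanishes --- which is exactly why (iv) excludes $z=\pm1$ while the limit representations in (v) still exist. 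Finally, the induction in (iv) does not close as you state it: to reduce $(\tau^{1/2}|\lambda_z(ag')\tau^{1/2})$ to $z\,(\tau^{1/2}|\lambda_z(g')\tau^{1/2})$ you must also know that $\lambda_z(g')\tau^{1/2}-z^{|g'|}\tau^{1/2}$ is supported on words beginning with the first letter of $g'$, so that it is orthogonal to $a^{-1}\tau^{1/2}$ as well as to $\tau^{1/2}$; the induction hypothesis has to be strengthened accordingly.
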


\begin{Corollary}\label{ends}
  According to an orthogonal decomposition $\ell^2(G) = \C \tau^{1/2} \oplus \ell^2(G(1)) \oplus \dots \oplus \ell^2(G(l))$,
  where $G(i)$ consists of words whose right ends are in $\{ s_i,s_i^{-1}\}$,
 $\lambda_{\pm 1}$ is unitarily equivalent to a direct sum of $\lambda$ by multiplicity $l$ and the parity/trivial character of $G$. 
\end{Corollary}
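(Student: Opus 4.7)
My approach is direct: verify invariance of each summand in the given decomposition under $\lambda_{\pm 1}$, then identify the resulting subrepresentations by constructing explicit intertwiners with $\lambda$. The invariance is immediate from the piecewise formula in Theorem~1.2(v) applied to $a\in G_1$. The $g=e$ branch fixes $\C\tau^{1/2}$ up to a sign. In the ``otherwise'' branch any cancellation occurs at the left end of the word, so the right-end letter, which determines membership in $G(i)$, is preserved. The exceptional $g=a^{-1}$ branch can be triggered by a vector $g\tau^{1/2}\in\ell^2(G(i))$ only when $g\in\{s_i,s_i^{-1}\}$ and $a=g^{-1}$, and the output $\mp a\tau^{1/2}$ is still in $\ell^2(G(i))$.

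The action on $\C\tau^{1/2}$ is $\lambda_{\pm 1}(a)\tau^{1/2}=\pm\tau^{1/2}$, which extends multiplicatively to $g\mapsto(\pm 1)^{|g|}$---the trivial character for $\lambda_{1}$ and the parity character for $\lambda_{-1}$. For each $i$, I would show $\lambda_{\pm 1}\big|_{\ell^2(G(i))}\cong\lambda$ by exhibiting the tautological $G$-equivariant map
\[
 U_i:\ell^2(G)\to\ell^2(G(i)),\qquad U_i(g\tau^{1/2}):=\lambda_{\pm 1}(g)\,s_i\tau^{1/2}.
\]
Intertwining $\lambda$ with $\lambda_{\pm 1}$ is automatic; everything reduces to showing $U_i$ is a well-defined unitary onto $\ell^2(G(i))$.

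The key computation, performed by applying the letters of $g$ from right to left, is
\[
 \lambda_{\pm 1}(g)\,s_i\tau^{1/2}=
 \begin{cases}
  g s_i\tau^{1/2} & \text{if $g$ does not end in $s_i^{-1}$,}\\
  \mp g\tau^{1/2} & \text{if $g$ ends in $s_i^{-1}$.}
 \end{cases}
\]
This crisp two-case answer depends on the observation that the exceptional $g=a^{-1}$ branch of the formula is triggered at most once---necessarily at the very first step---because reducedness of $g$ forbids the next-applied letter from being $s_i$. The induced word-level map $G\to G(i)$, sending $g$ to $gs_i$ or to $g$ according to the case, is a bijection onto the words of $G(i)$ ending in $s_i$ and those ending in $s_i^{-1}$ respectively, so $U_i$ carries one orthonormal basis to another up to signs, hence is the required unitary intertwiner. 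I expect the main obstacle to be purely clerical: tracking the sign $\mp$ and verifying the ``at most once'' assertion by careful induction, rather than anything conceptual.
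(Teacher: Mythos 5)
Your proof is correct and follows essentially the same route as the paper: the unitary $U_i$ you construct is exactly the orbit map (based at $s_i$) of the free permutation action of $G$ on $G^* = G(1)\sqcup\dots\sqcup G(l)$ that the paper uses to identify the restriction to $\ell^2(G(i))$ with the regular representation. The only divergence is that you handle $\lambda_{+1}$ by tracking the extra sign in the exceptional branch directly, whereas the paper deduces it from the $\lambda_{-1}$ case by conjugating with the parity operator $\Pi$; both are sound.
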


\begin{proof} Let $G^* = G \setminus \{ e\}$. 
  Since $\lambda_{-1}(g)(G^*\tau^{1/2}) = G^*\tau^{1/2}$ for $g \in G$ by (v), $\lambda_{-1}(g)$ induces a free action of $G$ on $G^*$ with 
  $G^* = G(1) \sqcup \dots \sqcup G(l)$ the decomposition of $G^*$ into orbits. Thus $\lambda_{-1}$ on $\ell^2(G^*)$ is decomposed into
  the direct sum of subrepresentations on $\ell^2(G(i))$ ($1 \leq i \leq l$) so that the restriction of $\lambda_{-1}$ on $\ell^2(G(i))$
  is unitarily equivalent to the regular representation of $G$.

  Likewise $\lambda_{1}$ and the parity operator $\Pi$ leave $\ell^2(G(i))$ invariant so that
  $\Pi\lambda_1(x)\Pi = \lambda_{-1}(\varpi(x))$ ($x \in C^*(G)$).
  From the decomposition of $\lambda_{-1}$, this implies that $\lambda_1$ on $\ell^2(G(i))$ is unitarily equivalent to
  $\lambda\circ\varpi = \Pi \lambda \Pi \cong \lambda$ as well.   
\end{proof}

{\small
\begin{Remark}
  The above corollary is taken from \cite{PS} 2.4~Remark (3),
  where it is pointed out that $\lambda_{-1}$ on $\ell^2(G^*)$ is considered by J.~Cuntz to illustrate K-amenability of free groups. 
\end{Remark}}

The following supplements to \cite{PS} are extracted from \cite{S88,S90}
with the case of critical values $\upsilon = \pm\sqrt{r/(1-r)}$ added in \cite[Corollary~4.4]{Y}.

\begin{Theorem}\label{SS} Let $\lambda_\upsilon$ be a unitary representation, i.e., $\upsilon \in [-1,1]$. 
  \begin{enumerate}
  \item
    If $|\upsilon| \leq \sqrt{r/(1-r)}$, $\lambda_\upsilon$ is unitarily equivalent to $\lambda$.
  \item
    If $\sqrt{r/(1-r)} < |\upsilon| < 1$, $\lambda_\upsilon$ is unitarily equivalent to a direct sum of $\lambda$ and
    the spherical representation of spectral parameter 
    \[
      c_r(\upsilon) = \frac{r}{\upsilon} + (1-r)\upsilon
    \]
   with the spectrum of $\lambda_\upsilon(h_1)$ equal to $\sigma_r \sqcup \{ c_r(\upsilon)\}$. 
  \end{enumerate}
\end{Theorem}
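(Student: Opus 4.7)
The plan is to reduce to a spectral decomposition via the cyclic vector $\tau^{1/2}$. By Theorem 1.2(iv), $\tau^{1/2}$ is cyclic for $\lambda_u$ and produces the positive definite function $\omega_u(g) = u^{|g|}$, so $\lambda_u$ is determined up to unitary equivalence by the state $\omega_u$ on $C^*(G)$. Since $\omega_u$ is radial, it factors through the conditional expectation as $\omega_u = \varphi_u \circ E$, where $\varphi_u$ is a state on $A \cong C([-1,1])$ represented by a probability measure $\mu_u$ on $[-1,1]$ satisfying $\int p_n(c)\, d\mu_u(c) = u^n$. Integrating the spherical states $\epsilon_c = \delta_c \circ E$ against $\mu_u$ recovers $\omega_u$, so the GNS construction yields a direct integral decomposition $\lambda_u \cong \int^{\oplus} \pi_c\, d\mu_u(c)$, where $\pi_c$ denotes the spherical representation of parameter $c$. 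Since the $\pi_c$ are irreducible and pairwise disjoint by Theorem 1.1, the unitary equivalence class of $\lambda_u$ is completely encoded in the measure class of $\mu_u$, with atoms contributing direct summands.

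To compute $\mu_u$, I would multiply $\int p_n(c)\, d\mu_u(c) = u^n$ by $z^n$ and sum using the stated generating function for $(p_n)$, obtaining
\[
\int_{[-1,1]} \frac{1-r-rcz}{1-r-cz+rz^2}\, d\mu_u(c) = \frac{1}{1-uz}.
\]
Factor the denominator as $r(z-z_+(c))(z-z_-(c))$ with $z_+(c)z_-(c) = (1-r)/r$; here $z_\pm(c)$ are complex conjugate for $c \in \sigma_r$ and real for $c \in [-1,1] \setminus \sigma_r$. The right-hand pole at $z=1/u$ must be matched inside the integral: a Dirac atom at $c$ produces such a pole precisely when $z_-(c) = 1/u$, which solves to $c = r/u + (1-r)u = c_r(u)$. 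The critical value $|u| = \sqrt{r/(1-r)}$ is the exact boundary; below it, $1/u$ lies beyond the branch point $\sqrt{(1-r)/r}$ and no atom forms, whereas above it $c_r(u) \in [-1,1] \setminus \sigma_r$ genuinely carries an atom whose weight is read as a residue at $z=1/u$.

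Standard Stieltjes inversion then yields the full form of $\mu_u$: its absolutely continuous part is supported on $\sigma_r$ and equivalent to the Kesten measure $\nu = \mu_0$ arising from $u = 0$ (where $\omega_0 = \tau$). In case (i), $\mu_u \sim \nu$, so the radial Plancherel formula of \cite{Y}, which realises $\lambda$ as $\int^{\oplus} \pi_c\, d\nu(c)$, gives $\lambda_u \cong \lambda$. In case (ii), $\mu_u$ carries an additional atom of positive weight at $c_r(u)$, yielding $\lambda_u \cong \lambda \oplus \pi_{c_r(u)}$; the spectral identity $\operatorname{spec}(\lambda_u(h_1)) = \sigma_r \sqcup \{c_r(u)\}$ then follows since $\operatorname{spec}(\lambda(h_1)) = \sigma_r$ by \cite{Ke, Co} and $c_r(u)$ appears as the isolated eigenvalue of $\pi_{c_r(u)}(h_1)$ on the spherical vector. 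The main obstacle is the critical case $|u| = \sqrt{r/(1-r)}$ (the addition from \cite{Y}), where $z=1/u$ collides with the branch point of the kernel and the naive atom weight vanishes; a careful limiting argument through the Stieltjes inversion is required to confirm that no spherical summand at $c = 2\sqrt{r(1-r)}$ develops and that $\mu_u$ remains equivalent to the Kesten measure at the boundary parameter.
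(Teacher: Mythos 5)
The paper states Theorem~\ref{SS} without proof, attributing it to \cite{S88,S90} with the critical case supplied by \cite{Y}; so the right comparison is with the sources. Your route --- decompose the Haagerup state $u^{|g|}=\varphi_u\circ E$ by the radial Plancherel formula and read off the summands from the measure $\mu_u=\varphi_u(dt)$ --- is precisely the mechanism of \cite{Y} that the paper itself records just after Theorem~1.5, where the density $\frac{u^{-1}-u}{2\pi}\frac{\sqrt{4r(1-r)-t^2}}{(1-t^2)(c_r(u)-t)}\,dt$ on $\sigma_r$ and the atom $\frac{1-c_r(u^2)}{1-c_r(u)^2}\delta(t-c_r(u))$ are written out. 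Szwarc's original argument in \cite{S88,S90} is more operator-theoretic (explicit intertwiners and a direct analysis of the complementary summand inside $\ell^2(G)$); your version buys a uniform treatment of all $u$, including the critical one, at the price of the moment computation. The outline is sound: the moments $\int p_n\,d\mu_u=u^n$, the generating-function identity, and the location of the atom via $z_-(c)=1/u$ are all correct, and the threshold $|u|=\sqrt{r/(1-r)}$ does correspond to $1/u$ reaching the circle $|z|=\sqrt{(1-r)/r}$ on which the poles $z_\pm(c)$, $c\in\sigma_r$, live.

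Two points need to be firmed up before this is a proof. First, the ``pole-matching'' argument only shows that an atom at $c_r(u)$ is \emph{forced} when $|u|>\sqrt{r/(1-r)}$; to conclude in case (i) that $\mu_u$ is equivalent to the Kesten measure (and in case (ii) that the continuous part still is), you must actually perform the Stieltjes inversion, check that the resulting density is strictly positive a.e.\ on $\sigma_r$ (it is, since $c_r(u)-t>0$ there), and invoke determinacy of the Hausdorff moment problem on $[-1,1]$ so that the exhibited measure is \emph{the} measure with moments $u^n$. Note also that $c_r(u)\notin\sigma_r$ for every $u\neq\pm\sqrt{r/(1-r)}$, so membership of $c_r(u)$ in $[-1,1]\setminus\sigma_r$ is not by itself the criterion for the atom; the weight $\frac{1-c_r(u^2)}{1-c_r(u)^2}$ is what vanishes exactly at the threshold. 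Second, the spectral statement: from $\lambda_u\cong\lambda\oplus\pi_{c_r(u)}$ you get $\operatorname{spec}(\lambda_u(h_1))=\sigma_r\cup\operatorname{spec}(\pi_{c_r(u)}(h_1))$, and you still need the inclusion $\operatorname{spec}\bigl(\pi_{c_r(u)}(h_1)|_{\varsigma^\perp}\bigr)\subset\sigma_r$ of Theorem~\ref{SP} to cut the right-hand side down to $\sigma_r\sqcup\{c_r(u)\}$; citing the isolated eigenvalue on $\varsigma$ alone gives only one inclusion.
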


Given a positive functional $\varphi$ of $A$,
the GNS representation space $\sR_\varphi \equiv \overline{C^*(G) (\varphi\circ E)^{1/2}}$ ($(\varphi\circ E)^{1/2}$ being the GNS-cyclic vector)
turns out to be a $C^*(G)$-$A$ bimodule: The left action of $C^*(G)$ is just the (left) GNS-representation based on $\varphi\circ E$ and
the right action of $A$ is given by $(x(\varphi\circ E)^{1/2})a = (xa) (\varphi\circ E)^{1/2}$ ($x \in C^*(G)$, $a \in A$).

If $\psi$ is another positive functional of $A$,
the space $\Hom(\sR_\varphi,\sR_\psi)$ of intertwiners is naturally isomorphic to $\Hom(L^2(\varphi),L^2(\psi))$ (\cite{Y}).
Here $L^2(\varphi)$ denotes the GNS representation space of $\varphi$ and $\Hom(L^2(\varphi),L^2(\psi))$ denotes the space of
intertwiners between $A$-modules $L^2(\varphi)$ and $L^2(\psi)$.

In view of $L^2(\varphi) = L^2(A)[\varphi]$ with $L^2(A)$ the standard space of the second dual W*-algebra $A^{**}$ and
$[\varphi]$ the support projection of $\varphi$ in $A^{**}$, we have
$\Hom(L^2(\varphi), L^2(\psi)) \cong A^{**} [\varphi][\psi]$.
Thus $\Hom(L^2(\varphi),L^2(\psi))$ is isomorphic to the $L^\infty$-space on $[-1,1]$ with respect to
the common measure class of $\varphi$ and $\psi$ in the Lebesgue decomposition. 


When $\varphi = \delta_t$ with $\varepsilon_t = \varphi\circ E$ a spherical state on $C^*(G)$,
$\sR_\varphi$ is simply denoted by $\sR_t$. As observed in \cite{Y}, $(\sR_t)_{-1 \leq t \leq 1}$ is a continuous family of 
$C^*(G)$-$A$ bimodules and therefore it provides a Borel field structure so that the following holds. 

\begin{Theorem}[Plancherel Formula]
  Under the identification of $\varphi$ with the associated Radon measure $\varphi(dt)$ on the spectrum $[-1,1]$ of $A$,
  we have a natural isometric isomorphism between $\sR_\varphi$ and
  the direct integral $\oint_{[-1,1]} \sR_t\, \sqrt{\varphi(dt)}$ specified by the correspondence 
  \[
    \sR_\varphi \ni (\varphi\circ E)^{1/2} \mapsto \oint_{[-1,1]} \varepsilon_t^{1/2}\, \sqrt{\varphi(dt)}
    \in \oint_{[-1,1]} \sR_t\, \sqrt{\varphi(dt)}.
  \]
\end{Theorem}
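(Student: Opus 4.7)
The plan is to exploit the identification $A \cong C([-1,1])$ together with the averaging structure supplied by $E$, and to compare the inner products of the two sides on the dense subspace $C^*(G)(\varphi\circ E)^{1/2}$ of $\sR_\varphi$. First, for $x,y \in C^*(G)$ I would compute
\[
  (x(\varphi\circ E)^{1/2} \mid y(\varphi\circ E)^{1/2})_{\sR_\varphi} = \varphi(E(y^*x)) = \int_{[-1,1]} E(y^*x)(t)\, \varphi(dt),
\]
where the last equality is the identification of the positive functional $\varphi$ on $A\cong C([-1,1])$ with integration against the Radon measure $\varphi(dt)$. On the spherical side, the analogous computation gives
\[
  (x\epsilon_t^{1/2} \mid y\epsilon_t^{1/2})_{\sR_t} = \epsilon_t(y^*x) = \delta_t(E(y^*x)) = E(y^*x)(t),
\]
so integrating against $\varphi(dt)$ reproduces the $\sR_\varphi$ inner product. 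Hence the prescribed correspondence $T: x(\varphi\circ E)^{1/2} \mapsto \oint_{[-1,1]} x\epsilon_t^{1/2}\sqrt{\varphi(dt)}$ preserves inner products on a dense subspace.

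Second, I would verify that $T$ is well-defined and extends to an isometry. If $x(\varphi\circ E)^{1/2}=0$ in $\sR_\varphi$, then $\varphi(E(x^*x))=0$, which forces $E(x^*x)(t)=0$ for $\varphi$-a.e.\ $t$, hence $x\epsilon_t^{1/2}=0$ for $\varphi$-a.e.\ $t$ and the image section vanishes in the direct integral. For measurability I would invoke the Borel field structure of $(\sR_t)_{-1\leq t\leq 1}$ noted immediately before the statement: since $E(y^*x)\in A\cong C([-1,1])$, the function $t\mapsto (x\epsilon_t^{1/2}\mid y\epsilon_t^{1/2})$ is continuous in $t$, so $t\mapsto x\epsilon_t^{1/2}$ is a measurable (in fact continuous) section with locally square-integrable norm. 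Thus $T$ extends by continuity to an isometric embedding of $\sR_\varphi$ into $\oint_{[-1,1]} \sR_t\sqrt{\varphi(dt)}$.

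Third, I would establish surjectivity via density. For each $t$, the vectors $\{x\epsilon_t^{1/2}: x\in C^*(G)\}$ are dense in $\sR_t$ by the very definition of the GNS construction. A standard approximation argument in direct integral theory then shows that sections of the form $\oint x\epsilon_t^{1/2}\sqrt{\varphi(dt)}$ span a dense subspace of $\oint \sR_t\sqrt{\varphi(dt)}$. Combined with the isometry from the previous step, this yields the desired isomorphism; the statement of the theorem is just the particular case $x = 1$.

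The principal technical obstacle I foresee is the rigorous handling of the Borel field $(\sR_t)$ and of what is meant by a ``measurable section'', since the theorem demands the identification of $\sR_\varphi$ with this specific direct integral in a natural way independent of the choice of $\varphi$. The key to resolving this is the observation that the continuity of the matrix coefficients $t\mapsto E(y^*x)(t)$ both supplies and characterizes the measurable structure, so that the Borel field is in fact the continuous field generated by the natural sections $t\mapsto x\epsilon_t^{1/2}$ for $x\in C^*(G)$, and in this formulation the identification becomes tautological once the inner product matching above is established.
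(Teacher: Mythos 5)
The paper offers no proof of this theorem: it is imported from \cite{Y}, with only the remark that the continuity of the family $(\sR_c)$ of $C^*(G)$-$A$ bimodules supplies the Borel field structure. Your argument is the natural direct verification of exactly that remark, and it is essentially correct: the computation $\varphi(E(y^*x))=\int_{[-1,1]}E(y^*x)(t)\,\varphi(dt)$ against $\epsilon_t(y^*x)=E(y^*x)(t)$ is the whole content of the isometry, and the continuity of $t\mapsto E(y^*x)(t)$ (because $E(y^*x)\in A\cong C([-1,1])$) correctly identifies the measurable field as the continuous field generated by the sections $t\mapsto x\epsilon_t^{1/2}$. The one step that leans on an unexamined ``standard approximation argument'' is surjectivity: fibrewise totality of $\{x\epsilon_t^{1/2};x\in C^*(G)\}$ in each $\sR_t$ does \emph{not} by itself make the constant sections dense in the direct integral --- one also needs the family of sections to be stable under multiplication by a dense algebra of scalar functions on $[-1,1]$. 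Here this is supplied by the right $A$-action: since $A\subset C^*(G)$ and $(xa)\epsilon_t^{1/2}=a(t)\,x\epsilon_t^{1/2}$ for $a\in A\cong C([-1,1])$ (a consequence of the bimodule property $E(a^*za)=a^*E(z)a$ and the multiplicativity of $\delta_t$), your family of sections is already a $C([-1,1])$-module, and density then follows from the standard theory of measurable fields. With that one sentence made explicit, your proof is complete and matches the intent of the paper's citation.
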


For the positive definite function $\phi_\upsilon = \varphi_\upsilon\circ E$ ($-1 \leq \upsilon \leq 1$) of Haagerup,
where $\varphi_\upsilon(h_n) = \upsilon^n$ ($n \geq 0$), 
the accompanied Radon measure $\varphi_\upsilon(dt)$ takes the following form (\cite{Y}): 
\begin{enumerate}
\item
  If $|\upsilon| \leq \sqrt{r/(1-r)}$, $\varphi_\upsilon(dt)$ is supported by the interval $\sigma_r$ and of the form 
  \[
   \varphi_\upsilon(dt) = \frac{\upsilon^{-1} - \upsilon}{2\pi}
  \frac{\sqrt{4r(1-r) - t^2}}{(1-t^2)(c_r(\upsilon) - t)}\, dt.   
\]
\item
  If $\sqrt{r/(1-r)} < |\upsilon| \leq 1$, adding to the continuous measure in (i),
there appears an atomic measure of the form
\[
  \frac{1-c_r(\upsilon^2)}{1-c_r(\upsilon)^2} \delta(t-c_r(\upsilon)). 
\]
\end{enumerate}
In the special case $\upsilon=0$ for which $\phi_0 = \tau$, $\varphi_0(dt)$ is reduced to the Kesten measure
\[
  \varphi_0(dt) = \lim_{\upsilon \to 0} \varphi_\upsilon(dt) = \frac{1}{2\pi r} \frac{\sqrt{4r(1-r) - t^2}}{1-t^2}\, dt
\]
on $\sigma_r = [-2\sqrt{r(1-r)},2\sqrt{r(1-r)}]$ as stated in \cite{Ca} (see also \cite{Sa, Py81}). 

\bigskip
  Let $\pi$ be the GNS-representation of the spherical state $\varepsilon_s$ for a spectral parameter $s = 2\sqrt{r(1-r)}$
  and $\sH = \overline{C^*(G)\varsigma}$ be the representation space of $\pi$ with the GNS-vector denoted by $\varsigma = \varepsilon_s^{1/2}$.  
  Notice that the spectral parameter $s = 2\sqrt{r(1-r)}$ is critical in the sense that it is located at
  the boundary of principal and complementary series.
  It is also critical from the viewpoint of C*-completion relative to $\ell^p(G)$ (\cite{BG, Ok}). 
  
  As a deformation of $\pi$,
  an analytic family $(\pi_c)$ of bounded representations on $\sH$ is constructed in \cite{Py91}
  for $c \in \C$ satisfying $c^2 \not\in (1,\infty)$ 
  in such a way that $\pi_s = \pi$
  and the following properties hold. 
  \begin{enumerate}
  \item
   For each $g \in G$, there is a finite dimensional subspace $\sH_g$ of $\sH$ so that
   $\pi_c(g) = \pi(g)$ on $\sH_g^\perp$ for any $c$, i.e., $\pi_c(g) - \pi(g)$ is a finite-rank operator. 
 \item
   For any $g \in G$, $\pi_c(g)$ is continuous in $c$ and holomorphic on $c \in \C \setminus ((-\infty,-1] \sqcup [1,\infty))$.
 \item
   We have $\pi_c(g)^* = \pi_{\overline{c}}(g^{-1})$ for $g \in G$ and any parameter $c$. Consequently $\pi_c$ is unitary if $c \in [-1,1]$.
 \item
 If $c \not= \pm 1$, 
 $\varsigma$ is a cyclic vector of $\pi_c$ and satisfies
 $(\varsigma|\pi_c(g)\varsigma) = \varepsilon_c(g) = \delta_c(h_{|g|})$ ($g \in G$).
\item
  For $c = \pm 1$, we have $\pi_{\pm 1}(g) \varsigma = \varepsilon_{\pm 1}(g) \varsigma$ ($g \in G$)
  but $\varsigma$ is not cyclic. 
 \end{enumerate}

 From the property (iv), we see $(\varsigma|(\pi_c(h_1) - c)^2\varsigma) = 0$, i.e., $\pi_c(h_1)\varsigma = c\varsigma$ for $c \in (-1,1)$,
 and then, by analytic continuation on $c$, $\pi_c(h_1)\varsigma = c \varsigma$ for any $c$.
 Thus $\varsigma$ is an eigenvector of $\pi_c(h_1)$ of eigenvalue $c$.
 
 More is known on the spectrum of $\pi_c(h_1)$: 

 \begin{Theorem}[{\cite[Theorem 5]{S88}}, {\cite[Lemma 3]{Py91}}]\label{SP}
    Any spectral parameter $c \in \C$ in the closed elliptic disk 
  of $\ell^1$-boundedness
  is an eigenvalue of $\pi_c(h_1)$ with $\varsigma$ its eigenvector so that $\pi_c(h_1) \varsigma^\perp \subset \varsigma^\perp$ and
  the spectrum of $\pi_c(h_1)$ as a bounded operator on the reducing subspace $\varsigma^\perp \subset \sH$ is contained in
  the regular spectrum $\sigma_r = [-2\sqrt{r(1-r)},2\sqrt{r(1-r)}]$.
 \end{Theorem}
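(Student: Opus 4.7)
The plan is to reduce the spectrum computation to a finite-rank perturbation of $\pi_s(h_1)$, use weak containment of the principal spherical representation $\pi_s$ in $\lambda$ to control the essential spectrum, and then exclude any isolated eigenvalues outside $\sigma_r$ case by case. The eigenvector identity $\pi_c(h_1)\varsigma = c\varsigma$ has already been established by analytic continuation in the paragraph preceding the theorem, so I focus on the reducing property and the spectral inclusion on $\varsigma^\perp$.

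First I would verify that $\varsigma^\perp$ is reducing for $\pi_c(h_1)$. By property (iii) we have $\pi_c(h_1)^* = \pi_{\overline c}(h_1)$, and applying the eigenvector identity to $\overline c$ gives $\pi_c(h_1)^*\varsigma = \overline c\varsigma$; hence both $\pi_c(h_1)$ and its adjoint preserve $\C\varsigma$, so $\varsigma^\perp$ is reducing. Writing $h_1 = \frac{1}{2l}\sum_{i=1}^l (s_i+s_i^{-1})$ as a finite linear combination of generators, property (i) makes $K_c := \pi_c(h_1) - \pi_s(h_1)$ a finite-rank operator on $\sH$, and since both summands preserve $\varsigma^\perp$ its restriction $K_c|_{\varsigma^\perp}$ is still of finite rank. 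Weyl's invariance of essential spectrum then yields
\[
\sigma_{\text{ess}}(\pi_c(h_1)|_{\varsigma^\perp}) = \sigma_{\text{ess}}(\pi_s(h_1)|_{\varsigma^\perp}).
\]
Since $s \in \sigma_r$, the principal spherical representation $\pi_s$ appears in the Plancherel decomposition of the regular representation and is therefore weakly contained in $\lambda$; it thus factors through $C^*_r(G)$, so $\sigma(\pi_s(h_1)) \subseteq \sigma(\lambda(h_1)) = \sigma_r$, and consequently $\sigma_{\text{ess}}(\pi_c(h_1)|_{\varsigma^\perp}) \subseteq \sigma_r$. What remains outside $\sigma_r$ is a set of isolated eigenvalues of finite multiplicity.

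Excluding these isolated eigenvalues is the main obstacle. For real $c$ the argument splits into cases: if $c \in \sigma_r$ then $\pi_c$ itself is weakly contained in $\lambda$; if $c \in (-1,1)\setminus\sigma_r$, I would write $c = c_r(u)$ with $\sqrt{r/(1-r)} < |u| < 1$ and invoke the decomposition $\lambda_u \cong \lambda \oplus \pi_c$ from Theorem~\ref{SS}(ii), which identifies $c$ as the multiplicity-one eigenvalue of $\lambda_u(h_1)$ with eigenvector corresponding to $\varsigma$, leaving no additional eigenvalues outside $\sigma_r$ on $\varsigma^\perp$; the endpoints $c = \pm 1$ are handled by the limit descriptions analogous to Corollary~\ref{ends}. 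For complex $c$ in the elliptic disk I would propagate this by analytic continuation via Kato's holomorphic perturbation theory: a putative isolated eigenvalue at complex $c_0$ would trace an analytic branch $c \mapsto \mu(c)$ along paths avoiding $\sigma_r$, and the uniform finite rank of $K_c$ together with the connectedness of the elliptic disk would allow tracking such a branch back to the real axis, contradicting the real-case exclusion.

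The hardest step is this last one. Even with the analytic Fredholm theorem at hand, the real-line nonvanishing of the Birman--Schwinger-type determinant that detects eigenvalues does not a priori exclude isolated complex zeros throughout the disk; one must combine perturbation theory with the uniform rank bound on $K_c$ and the conjugation symmetry $\pi_c(h_1)^* = \pi_{\overline c}(h_1)$ to run an index-style global argument ruling out discrete eigenvalues for complex parameters.
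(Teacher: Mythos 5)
This theorem is not proved in the paper at all --- it is imported verbatim from \cite[Theorem 5]{S88} and \cite[Lemma 3]{Py91} --- so the comparison can only be with the arguments in those references, which proceed by an explicit computation in the model for $\pi_c$: one identifies $\pi_c(h_1)$ concretely as $c$ on $\C\varsigma$ plus an operator on $\varsigma^\perp$ that is (up to similarity) \emph{independent of the parameter} $c$ and has spectrum $\sigma_r$. That single computation disposes of every $c$ in the elliptic disk, real or complex, simultaneously. Your opening steps are fine: the reducing property of $\varsigma^\perp$ via $\pi_c(h_1)^*=\pi_{\overline c}(h_1)$ is correct, the finite-rank perturbation plus Weyl/Fredholm argument legitimately confines everything outside $\sigma_r$ to isolated eigenvalues of finite multiplicity (the Fredholm domain contains the connected set $\C\setminus\sigma_r$, which meets the resolvent set, so the index-zero dichotomy applies even though $\pi_c(h_1)$ is non-normal for complex $c$), and the real case can indeed be extracted from Theorem~\ref{SS} --- though even there you silently use that $c_r(u)$ has multiplicity one in $\lambda_u(h_1)$, which Theorem~\ref{SS} as stated does not quite give you.

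The genuine gap is the one you flag yourself: the passage from real to complex $c$. The elliptic disk is a two-real-dimensional region, so the complex case is the actual content of the cited theorem, and your branch-tracking argument cannot close it. Since $K_c$ has uniformly bounded finite rank, an eigenvalue $\mu\notin\sigma_r$ of $\pi_c(h_1)|_{\varsigma^\perp}$ is detected by the vanishing of a determinant $d(c,\mu)=\det\bigl(1+K_c(\pi_s(h_1)|_{\varsigma^\perp}-\mu)^{-1}\bigr)$, jointly holomorphic in $(c,\mu)$; knowing $d(c,\mu)\neq 0$ for all real $c$ says nothing about complex $c$ (compare $1+c^2$, nonvanishing on $\R$ but not on $\C$). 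Equally, an analytic eigenvalue branch over a complex $c_0$ need not continue back to the real axis: branches of the discrete spectrum can be born from, and absorbed into, the essential spectrum $\sigma_r$ at interior parameter values, so connectedness of the disk buys you nothing, and the conjugation symmetry $\pi_c(h_1)^*=\pi_{\overline c}(h_1)$ only pairs an eigenvalue at $c$ with one at $\overline c$ rather than excluding either. No ``index-style global argument'' of the kind you gesture at is available here; what actually works is the structural fact, established by direct computation in \cite{S88,Py91}, that the $\varsigma^\perp$-component of $\pi_c(h_1)$ does not move with $c$. Without that (or some substitute), the statement remains unproved for non-real parameters.
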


 {\small
 \begin{Remark}
   A common-space-realization $\Pi_z$ of the complementary component of $\lambda_z$ in \cite{S88} turns out to be extended to
   the region $z^2 \not\in [1,\infty)$ by an analytic continuation, which seems to be globally similar to $(\pi_z)$ in \cite{Py91}.
\end{Remark}}

  

\section{Primitive Ideal Space}
We begin with a universal construction of radial bimodules.
For two positive functionals $\varphi$ and $\psi$ of $A$ satisfying $\varphi \leq \psi$,
the spherical decomposition in the Plancherel formula enables us to define an isometric embedding by 
\[
  \oint_{[-1,1]} x\, \sqrt{\varphi(dt)} \mapsto \oint_{[-1,1]} x\, \sqrt{\frac{\varphi(dt)}{\psi(dt)}}\, \sqrt{\psi(dt)}
\]
($\varphi(dt)/\psi(dt)$ being the Radon-Nikodym derivative), which in turn is converted to
an isometric embedding of $\sR_\varphi$ into $\sR_\psi$ thanks to
the identity $(\varphi\circ E)^{1/2} = \sqrt{d\varphi/d\psi} (\psi\circ E)^{1/2}$
in the standard representation space $L^2(C^*(G))$ of the second dual $C^*(G)^{**}$ (see \cite{Yaa}).

By the way of this construction, embeddings $\sR_\varphi \to \sR_\psi$ give an inductive system of
$C^*(G)$-$A$ bimodules and we obtain the universal $C^*(G)$-$A$ bimodule $\sR$ as an inductive limit.
The image of the left action of $C^*(G)$ on $\sR$ is then called the radial C*-algebra of $G$ and denoted by $C^*_{\text{rad}}(G)$,
which is univerasl with respect to radial representations.
Our main concern is in describing the primitive ideal space of $C^*_{\text{rad}}(G)$.

In terms of this universal radial bimodule,
irreducibility of spherical representations is now rephrased in the following manner: 

\begin{Proposition}
  The von Neumann algebra $\text{End}({}_{C^*(G)} \sR_A)$ of intertwiners is isomorphic to $A^{**}$ through the right multiplication of $A^{**}$.
\end{Proposition}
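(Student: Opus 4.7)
The plan is to reduce everything to the individual bimodules $\sR_\varphi$ via the inductive limit construction and then apply the Plancherel decomposition on each piece. The core idea is that irreducibility plus disjointness of the spherical series forces the commutant of the left $C^*(G)$-action on each $\sR_\varphi$ to be the diagonal algebra of the Plancherel integral, and this diagonal algebra is exactly right multiplication by $A^{**}[\varphi]$.

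First, on a fixed $\sR_\varphi$, the Plancherel formula gives
\[
  \sR_\varphi \cong \oint_{[-1,1]} \sR_t\, \sqrt{\varphi(dt)},
\]
under which the left $C^*(G)$-action becomes the direct integral of spherical GNS-representations $\pi_t$ acting on $\sR_t$. By Theorem 1.1 these $\pi_t$ are irreducible and mutually disjoint for distinct values of $t$, so the standard direct-integral criterion identifies the commutant of $C^*(G)$ on $\sR_\varphi$ with the diagonal algebra $L^\infty([-1,1],\varphi)$. Using the identification $\Hom(L^2(\varphi),L^2(\varphi)) \cong A^{**}[\varphi]$ recalled before the Plancherel formula, I would match this diagonal algebra with the right-multiplication action of $A^{**}[\varphi]$ on $\sR_\varphi$.

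Second, I would pass to the inductive limit. For $\varphi \leq \psi$ the embedding $\sR_\varphi \hookrightarrow \sR_\psi$ is by construction $C^*(G)$-linear, and under the Plancherel identification it corresponds to multiplication by $\sqrt{\varphi(dt)/\psi(dt)}$; this realizes the canonical inclusion $A^{**}[\varphi] \hookrightarrow A^{**}[\psi]$ at the level of the commutant algebras via the support-projection ordering $[\varphi] \leq [\psi]$. Since $A \cong C([-1,1])$, every projection of $A^{**}$ arises as $[\varphi]$ for a suitable positive measure $\varphi$, so $\varinjlim A^{**}[\varphi] = A^{**}$, and the right action of $A$ on $\sR$ extends normally to $A^{**}$ because on each $\sR_\varphi$ it is just the action of $A \subset A^{**}$ on the standard form $L^2(A)[\varphi]$.

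For the converse direction, any $T \in \text{End}({}_{C^*(G)}\sR_A)$ restricts on each $\sR_\varphi$ to an element $T_\varphi \in A^{**}[\varphi]$, and the compatibility of these restrictions under the embeddings $\sR_\varphi \hookrightarrow \sR_\psi$ glues them into a single element of $\varinjlim A^{**}[\varphi] \cong A^{**}$; injectivity of the map $A^{**} \to \text{End}({}_{C^*(G)}\sR_A)$ follows from the fact that the supports $[\varphi]$ exhaust $A^{**}$ and the right action on each $\sR_\varphi$ is faithful on $A^{**}[\varphi]$. The main obstacle, I expect, is the bookkeeping around the inductive-limit topology on $\sR$: verifying that the normal right $A^{**}$-action assembled from local pieces really is globally well-defined and bounded on $\sR$, and that the collection $(T_\varphi)$ produced from an intertwiner $T$ assembles into a bona fide element of $A^{**}$ rather than some weaker completion. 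Once disjointness of the spherical series gives the diagonal commutant on each $\sR_\varphi$, the remaining work is largely formal.
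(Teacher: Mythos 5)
Your proposal follows essentially the same route as the paper: the local identification $\End({}_{C^*(G)}(\sR_\varphi)_A) \cong A^{**}[\varphi]$ (which the paper simply cites from \cite{Y}, and which you rederive from the Plancherel decomposition together with irreducibility and mutual disjointness of the spherical series), followed by passage to the inductive limit using the increasing net of projections $e_\varphi$ onto the $\sR_\varphi$. One imprecision in your converse direction: an intertwiner $T$ does not a priori \emph{restrict} to $\sR_\varphi$ --- that would already presuppose that $T$ commutes with $e_\varphi$, i.e.\ with right multiplication by $[\varphi]$ --- so one must work with the compressions $e_\varphi T e_\varphi$; the paper recovers $T$ from these not by assembling an element of $A^{**}$ directly but by a matrix-element computation showing that $T$ commutes with $\End(\sR_A)$ in the limit $\varphi \uparrow \infty$, and then invoking the double commutant theorem. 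A second small slip: not every projection of $A^{**}$ is a support projection $[\varphi]$ (only $\sigma$-finite ones are, and $C([-1,1])^{**}$ is far from $\sigma$-finite), but all your argument needs is $\sup_\varphi [\varphi] = 1$, which does hold.
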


\begin{proof}
  By \cite{Y}, the space $\End(\sR_\varphi)$ of self-intertwiners is generated by the right action of $A$ and
  $\sR$ contains $L^2(A)$ as an $A$-$A$ subbimodule, whence the problem is reduced to showing that
  $\End(\sR)$ is generated by the right action of $A$ as a von Neumann algebra. 

  To see this, observe first that the projection $e_\varphi$ to the subspace $\sR_\varphi$ belongs to $\End(_{C^*(G)}\sR_A)$ and
  satisfies $\lim_{\varphi \uparrow \infty} e_\varphi = 1_\sR$ in the $\sigma$-strong operator topology.
  Let $T \in \End(_{C^*(G)}\sR_A)$. If its reduction $e_\varphi Te_\varphi \in \End(_{C^*(G)}\sH_A)$ commutes with $e_\varphi \End(\sR_A) e_\varphi$
  for any $\varphi \in A^*_+$, $a' \in \End(\sR_A)$ and $\xi, \eta \in \sR_\psi$ with $\psi \in A^*_+$ satisfy 
  \[
    (\xi|Te_\varphi a'\eta) = (\xi|e_\varphi T e_\varphi a' e_\varphi \eta)
    = (\xi|e_\varphi a' e_\varphi T e_\varphi \eta) = (\xi|a' e_\varphi T \eta)
    \quad
    (\varphi \geq \psi) 
  \]
  and then we have $(\xi|Ta'\eta) = (\xi|a'T\eta)$ by taking limit $\varphi \uparrow \infty$. 
  Since the inductive (algebraic) limit of $C^*(G) (\psi\circ E)^{1/2}$ for $\psi \uparrow \infty$ is dense in $\sR$, this implies
  $Ta' = a'T$, i.e., $T \in \End(\sR_A)'$.
  As a final step, apply the double commutant theorem of von Neumann. 
\end{proof}

\begin{Corollary}
  For a positive functional $\varphi$ of $A$, $\End(\sR_\varphi) \cong A^{**}[\varphi]$,
  where $[\varphi]$ denotes the support projection of $\varphi$ in $A^{**}$. 
\end{Corollary}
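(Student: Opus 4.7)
The plan is to deduce the Corollary from the preceding Proposition by identifying the orthogonal projection onto the sub-bimodule $\sR_\varphi \subset \sR$ inside $\End({}_{C^*(G)}\sR_A) \cong A^{**}$ with the support projection $[\varphi]$.

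By the inductive-limit construction of $\sR$, the space $\sR_\varphi$ sits as a $C^*(G)$-$A$ sub-bimodule of $\sR$, so the orthogonal projection $e_\varphi$ onto $\sR_\varphi$ lies in $\End({}_{C^*(G)}\sR_A)$ and, by the Proposition, corresponds to a projection $q_\varphi \in A^{**}$ acting by right multiplication. The crux is to show $q_\varphi = [\varphi]$, which I would do by examining how the right $A^{**}$-action hits the cyclic vector $\xi := (\varphi\circ E)^{1/2}$. From the bimodule identity $(x\xi)a = (xa)\xi$ with $x = 1$ we obtain $\xi a = a\xi$ for $a \in A$, whence $\|\xi a\|^2 = \varphi(a^*a)$; extending the right $A$-action normally to $A^{**}$ then yields $\xi p = 0$ for a projection $p \in A^{**}$ if and only if $p \leq 1 - [\varphi]$. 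Because $\xi$ is $C^*(G)$-cyclic in $\sR_\varphi$ and the right $A^{**}$-action commutes with the $C^*(G)$-action, the annihilator of $\sR_\varphi$ in $A^{**}$ coincides with that of $\xi$, namely $(1-[\varphi])A^{**}$; and this annihilator equals $(1-q_\varphi)A^{**}$ by the very meaning of $q_\varphi$. Hence $q_\varphi = [\varphi]$.

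To conclude, I would compress the Proposition by $e_\varphi$. Since $e_\varphi$ is both left-$C^*(G)$-invariant and right-$A$-invariant, the orthogonal complement $\sR_\varphi^\perp$ is also a $C^*(G)$-$A$ sub-bimodule, so every $C^*(G)$-$A$-endomorphism of $\sR_\varphi$ extends by zero to an element of $e_\varphi \End({}_{C^*(G)}\sR_A) e_\varphi$, and conversely the compression of any such endomorphism preserves $\sR_\varphi$. Thus $\End(\sR_\varphi) \cong e_\varphi A^{**} e_\varphi = [\varphi]A^{**}[\varphi] = A^{**}[\varphi]$, the last equality using commutativity of $A^{**}$.

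The main obstacle is the identification $q_\varphi = [\varphi]$: the Proposition only tells us that $e_\varphi$ matches \emph{some} projection in $A^{**}$, and one must pin down exactly which one. The annihilator argument above handles this cleanly, though it requires care in distinguishing the algebraic right $A$-action from its normal extension to $A^{**}$, and in using $C^*(G)$-cyclicity of $\xi$ to reduce the annihilator of the whole bimodule to that of a single vector.
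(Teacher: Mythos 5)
Your argument is correct and is essentially the intended derivation: the paper states the Corollary without proof, and the natural route is exactly yours, namely compressing the Proposition by the projection $e_\varphi$ onto $\sR_\varphi$ and identifying the corresponding projection in $A^{**}$ with $[\varphi]$ via the annihilator of the cyclic vector $(\varphi\circ E)^{1/2}$, on which the right $A$-action reproduces the GNS representation of $\varphi$. Note also that the statement is already implicit in the background review, where $\End(\sR_\varphi) \cong \Hom(L^2(\varphi),L^2(\varphi)) \cong A^{**}[\varphi][\varphi] = A^{**}[\varphi]$ is recorded with a citation to \cite{Y}, so your careful handling of the normal extension of the right action is the only point where any real work is needed.
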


We use the notation $\sC(\sH)$ to stand for the compact operator algebra on a Hilbert space $\sH$. 

 \begin{Proposition}\label{regularity}
   For a continuous function $f \in C([-1,1])$ vanishing on $\sigma_r \subset [-1,1]$, we have 
  $\pi_t(f(h_1)) = f(t) |\varsigma)(\varsigma|$ ($-1 \leq t \leq 1$)
  and the representation $\pi_t$ for $t \in (-1,1) \setminus \sigma_r$ is well-behaved 
  in the sense that $\sC(\sH) \subset \pi_t(C^*(G))$.
 \end{Proposition}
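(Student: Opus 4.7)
The plan is in two steps. For the identity $\pi_t(f(h_1)) = f(t)|\varsigma)(\varsigma|$, I would invoke Theorem \ref{SP}, which places $\varsigma$ as an eigenvector of $\pi_t(h_1)$ with eigenvalue $t$ and shows that $\varsigma^\perp$ is reducing for $\pi_t(h_1)$ with spectrum there contained in $\sigma_r$. Since $\pi_t$ is unitary for $t \in [-1,1]$, $\pi_t(h_1)$ is self-adjoint and decomposes orthogonally as $t\cdot|\varsigma)(\varsigma| \oplus B_t$ with $B_t$ self-adjoint on $\varsigma^\perp$ and $\sigma(B_t) \subset \sigma_r$. Using $A \cong C([-1,1])$ (with $h_1$ corresponding to the coordinate function) and the fact that the restriction of $\pi_t$ to $A$ is a $*$-homomorphism, the continuous functional calculus gives $\pi_t(f(h_1)) = f(\pi_t(h_1)) = f(t)|\varsigma)(\varsigma| \oplus f(B_t)$. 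Vanishing of $f$ on $\sigma_r$ forces $f(B_t) = 0$ by the spectral mapping theorem, yielding the asserted identity.

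For the second claim, fix $t \in (-1,1) \setminus \sigma_r$. Since $\sigma_r$ is a compact subset of $[-1,1]$ not containing $t$, Urysohn's lemma produces $f \in C([-1,1])$ with $f(t) = 1$ and $f \equiv 0$ on $\sigma_r$. The first part then places the rank-one projection $|\varsigma)(\varsigma|$ inside $\pi_t(C^*(G))$. Because $\varsigma$ is cyclic for $\pi_t$ by property (iv) of the analytic family (applicable since $t \neq \pm 1$), the vectors $\pi_t(x)\varsigma$ with $x \in C^*(G)$ are norm-dense in $\sH$. Consequently, operators of the form $\pi_t(x)|\varsigma)(\varsigma|\pi_t(y) = |\pi_t(x)\varsigma)(\pi_t(y^*)\varsigma|$ range over a norm-dense subset of the rank-one operators on $\sH$, and, since $\pi_t(C^*(G))$ is norm-closed as the image of a C*-algebra under a $*$-homomorphism, it must contain every rank-one operator, hence every finite-rank operator, and finally all of $\sC(\sH)$.

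The only point that genuinely requires care is the bookkeeping for the functional calculus on $h_1 \in A$: one must verify that $f(h_1)$ computed inside the commutative C*-algebra $A$ is carried by $\pi_t$ to $f(\pi_t(h_1))$, which is immediate from $\pi_t|_A$ being a continuous $*$-homomorphism between commutative C*-algebras. Apart from this, the argument is a direct application of Theorem \ref{SP} together with the cyclicity supplied by property (iv), so no real obstacle is anticipated.
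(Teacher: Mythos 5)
Your proof is correct and follows essentially the same route as the paper: Theorem \ref{SP} plus functional calculus on the orthogonal decomposition $\pi_t(h_1)=t|\varsigma)(\varsigma|\oplus B_t$ gives the identity, and the rank-one projection $|\varsigma)(\varsigma|$ then forces $\sC(\sH)\subset\pi_t(C^*(G))$. The only (harmless) deviation is in the last step, where you use cyclicity of $\varsigma$ to generate a dense set of rank-one operators instead of invoking irreducibility of $\pi_t$ as the paper does; both are valid, and your variant is consistent with the paper's own later footnote remarking that Kadison transitivity can be replaced by approximation via $\pi_t(y)\varsigma$.
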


 \begin{proof}
   By the spectral property in Theorem~\ref{SP},
   $\pi_t(f(h_1)) = f(t)|\varsigma)(\varsigma| = 0$ if $t \in \sigma_r$.
  
   For $t \in [-1,1] \setminus \sigma_r$,
   the spectrum of $\pi_t(f(h_1))$ is supported by the point $t$ with $|\varsigma)$ a unique eigenvector
   and hence $\pi_t(f(h_1)) = f(t) |\varsigma)(\varsigma|$ by functional calculus.
   
   Thus $\pi_t(C^*(G))$ contains a rank-one projection $|\varsigma)(\varsigma|$ and the irreducibility of $\pi_t$ for $-1 < t < 1$ implies 
   $\sC(\sH) \subset \pi_t(C^*(G))$.
 \end{proof}

 {\small
 \begin{Remark}
   Properties of complementary series representations concerning compact operator algebras are also pointed out in \cite{S90}.
\end{Remark}}

 \begin{Proposition}\label{prim}
   Primitive ideals of $C^*(G)$ associated to pure states $\varepsilon_t$ are different for
   $t \in [-1,1] \setminus \sigma_r$
   and coincide with the kernel of the regular representation $\lambda$ for $t \in \sigma_r$.
 \end{Proposition}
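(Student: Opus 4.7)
The plan is to dispose of the two assertions separately. The equality $\ker \pi_c = \ker \lambda$ for $c \in \sigma_r$ will be obtained by a double inclusion, while the distinctness of the $\ker \pi_c$'s for $c \in [-1,1]\setminus\sigma_r$ follows directly from Proposition~\ref{regularity}.

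For $c \in \sigma_r$, I would first derive $\ker \lambda \subset \ker \pi_c$. The Plancherel formula applied to the standard trace $\tau = \varphi_0 \circ E$ gives $\ell^2(G) \cong \oint_{\sigma_r} \sR_t \sqrt{\varphi_0(dt)}$, with the Kesten measure $\varphi_0$ of full support on $\sigma_r$. Thus $f \in \ker \lambda$ forces $\pi_t(f) = 0$ for $\varphi_0$-a.e.\ $t \in \sigma_r$. To promote this to every $t \in \sigma_r$, I would verify that $t \mapsto \pi_t(f)$ is norm-continuous on $[-1,1]$: properties (i)--(ii) of the analytic family $(\pi_c)$ yield norm-continuity on each group element $g$ (the difference $\pi_c(g) - \pi(g)$ being finite-rank on the fixed subspace $\sH_g$), this extends to $\C G$ by linearity, and then to $C^*(G)$ via the uniform bound $\|\pi_t\| \leq 1$ from property (iii). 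For the reverse inclusion, $\pi_c$ descends to a nonzero representation $\overline{\pi_c}$ of $C^*_r(G) = C^*(G)/\ker \lambda$; by Powers' simplicity of $C^*_r(G)$ for non-abelian free groups ($l \geq 2$), $\overline{\pi_c}$ must be injective, so $\ker \pi_c = \ker \lambda$.

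For $c_1 \ne c_2$ in $[-1,1]\setminus\sigma_r$, I would separate the two kernels by a single element of $A$. By Tietze, pick $f \in C([-1,1])$ vanishing on $\sigma_r$ with $f(c_1) = 1$ and $f(c_2) = 0$; this is possible since $c_1, c_2$ sit outside the closed set $\sigma_r$. For $c \in (-1,1)\setminus\sigma_r$, Proposition~\ref{regularity} yields $\pi_c(f(h_1)) = f(c)|\varsigma)(\varsigma|$; for the endpoints $c = \pm 1$, the GNS representation of $\epsilon_c$ is the trivial/parity character and $\pi_c(f(h_1)) = f(c)$. In either case, $\pi_{c_1}(f(h_1)) \neq 0$ while $\pi_{c_2}(f(h_1)) = 0$, distinguishing the kernels.

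The main obstacle will be the upgrade from a.e.\ vanishing to vanishing everywhere on $\sigma_r$ in the Plancherel argument: it rests on the norm-continuity of $c \mapsto \pi_c(f)$ (which needs careful justification from the paper's setup) together with the full support of $\varphi_0$ on $\sigma_r$. A secondary concern is the reverse inclusion, which appeals to Powers' simplicity of $C^*_r(G)$, an input external to the paper's radial framework that I do not see how to circumvent by arguments internal to the analytic family $(\pi_c)$ alone.
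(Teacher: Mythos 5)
Your proposal is correct and takes essentially the same route as the paper: a.e.\ vanishing from the Plancherel formula upgraded to all of $\sigma_r$ by continuity and density (the paper uses weak*-continuity of $\epsilon_t$ where you use norm-continuity of $\pi_t$; both work), simplicity of $C^*_{\text{red}}(G)$ for the reverse inclusion (the paper invokes exactly this, citing Powers, so your concern about it being ``external'' is moot), and the isolated spectral value $c\notin\sigma_r$ of $\pi_c(h_1)$ for distinctness. The only cosmetic difference is that for distinctness the paper compares spectra of $\pi_c(h_1)$ directly via Theorem~\ref{SP} instead of exhibiting the separating element $f(h_1)$ from Proposition~\ref{regularity}, but these rest on the same spectral fact.
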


 \begin{proof}
   Clearly characters $\varepsilon_{\pm 1}$ give rise to ideals different from those for $\varepsilon_t$ ($-1 < t < 1$)
   and we focus on irreducible representations $\pi_t$ ($-1 < t < 1$).
   
   If two primitive ideals coincide for spectral parameters in $(-1,1)$, 
   the associated C*-algebras $\pi_t(C^*(G))$'s are canonically isomorphic and give rise to the same spectrum of $\pi_t(h_1)$.
   In the case when one of $t$ is outside of $\sigma_r$,
   this necessitates the coincidence of $t$ by Theorem~\ref{SP}.

  Let $t \in \sigma_r$. If $x \in C^*(G)$ is in the kernel of $\lambda$,
  then the Plancherel formula ensures $\pi_{t'}(x^*x) = 0$ for almost any $t' \in \sigma_r$ with respect to the Kesten measure
  (being equivalent to the Lebesgue measure on $\sigma_r$) and
  we can find a sequence $t_n \in \sigma_r$ converging to $t$ so that $\pi_{t_n}(x^*x) = 0$.
  Since $\varepsilon_{t'}$ is weak*-continuous in $t'$, we have 
  \[
    \varepsilon_t(y^*x^*xy) = \lim_{n \to \infty} \varepsilon_{t_n}(y^*x^*xy) = 0
  \]
  for any $y \in C^*(G)$. Thus, by cyclicity of $\varsigma$, $\pi_t(x^*x) = 0$, i.e., $\ker\lambda \subset \ker \pi_t $
  with $\lambda(\ker\pi_t)$ a closed ideal of a simple C*-algebra $C^*_r(G)$, proving that
  $\ker\pi_t = \ker\lambda$. 
\end{proof}


\begin{Corollary}\label{singularity}~
  \begin{enumerate}
    \item
      We have $\pi_t(C^*(G)) \cap \sC(\sH) = \{ 0\}$ for $t \in \sigma_r$.
    \item
      For $f \in C([-1,1])$, the condition $f(h_1) \in \ker\pi$ is equivalent to $f|_{\sigma_r} = 0$. 
    \end{enumerate}
 \end{Corollary}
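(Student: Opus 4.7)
The plan is to derive both parts from Proposition~\ref{prim}, which identifies $\ker\pi_c$ with $\ker\lambda$ for every $c \in \sigma_r$, combined with the simplicity and infinite dimensionality of $C^*_r(G)$ (in the nontrivial case $l \ge 2$).

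For (i), I would fix $c \in \sigma_r$ and invoke Proposition~\ref{prim} to get a canonical isomorphism $\pi_c(C^*(G)) \cong C^*_r(G)$. The intersection $\pi_c(C^*(G)) \cap \sC(\sH)$ is a closed two-sided $*$-ideal of the simple unital C*-algebra $\pi_c(C^*(G))$, so it is either $\{0\}$ or the whole algebra. The second alternative would place $1_\sH = \pi_c(e)$ in $\sC(\sH)$, forcing $\sH$ to be finite dimensional, in which case $\pi_c(C^*(G)) \subset B(\sH)$ would also be finite dimensional, contradicting the infinite dimensionality of $C^*_r(G)$.

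For (ii), I would note that $\pi = \pi_s$ with $s = 2\sqrt{r(1-r)} \in \sigma_r$, so Proposition~\ref{prim} again yields $\ker\pi = \ker\lambda$. Hence $f(h_1) \in \ker\pi$ if and only if $\lambda(f(h_1)) = f(\lambda(h_1)) = 0$, and continuous functional calculus applied to the spectrum of $\lambda(h_1)$, which is precisely $\sigma_r$ as recalled in Section~1, converts this into the condition $f|_{\sigma_r} = 0$.

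Rather than a genuine obstacle, the proof is a short deduction from results already at hand; the key inputs to keep in mind are the kernel identification in Proposition~\ref{prim} and the simplicity of $C^*_r(G)$. Once these are granted, the remainder is routine ideal theory for (i) and spectral calculus for (ii).
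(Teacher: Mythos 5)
Your proposal is correct, and part (ii) is essentially the paper's own argument: both reduce to the kernel identity $\ker\pi = \ker\lambda$ from Proposition~\ref{prim} and the fact that $\sigma(\lambda(h_1)) = \sigma_r$, then apply continuous functional calculus. For part (i), however, you take a genuinely different route. The paper argues by contradiction via irreducibility: if $\pi_c(C^*(G)) \cap \sC(\sH) \neq 0$, then $\sC(\sH) \subset \pi_c(C^*(G))$, and since an irreducible representation whose image contains the compacts is determined up to unitary equivalence by its kernel, all $\pi_t$ with $t \in \sigma_r$ would be mutually equivalent, contradicting the disjointness of spherical representations of distinct spectral parameters. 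You instead note that $\pi_c(C^*(G)) \cong C^*(G)/\ker\lambda \cong C^*_r(G)$ is simple and unital, so the closed ideal $\pi_c(C^*(G)) \cap \sC(\sH)$ is either zero or contains $1_\sH$, and the latter forces $\dim \sH < \infty$, which is absurd. Your version is self-contained modulo Powers' simplicity theorem --- an ingredient the paper already invokes in its proof of Proposition~\ref{prim} --- and avoids both the disjointness theorem and the uniqueness statement for irreducible representations containing the compacts; the paper's version trades that for staying entirely within the representation-theoretic facts it has set up. Your caveat $l \geq 2$ is appropriate and consistent with the paper's implicit standing assumption, since simplicity of $C^*_r(G)$ is already used in Proposition~\ref{prim}.
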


\begin{proof}
  (i) If $\pi_t(C^*(G)) \cap \sC(\sH) \not= 0$, $\sC(\sH) \subset \pi_t(C^*(G))$ by irreducibility of $\pi_t$.
  In view of $\ker \pi_{t'} = \ker \pi_t$ for $t' \in \sigma_r$, this implies $\pi_{t'} \cong \pi_t$ (\cite{Di} Corollary 4.1.10),
which contradicts with the disjointness of $\pi_t$ and $\pi_{t'}$ for $t \not= t'$ in $\sigma_r$.

(ii) From the kernel coincidence, the spectrum of $\pi(h_1)$ is 
  \[
    \sigma_{C^*(G)/\ker\pi}(h_1 + \ker \pi)
    = \sigma_{C^*(G)/\ker\lambda}(h_1 + \ker \lambda)
    = \sigma(\lambda(h_1)) = \sigma_r,  
  \]
  whence $\pi(f(h_1)) = f(\pi(h_1)) = 0$ if and only if $f|_{\sigma_r} = 0$. 
\end{proof}

\begin{Proposition}\label{hspectrum}
  Let $-1 \leq t \leq 1$. In the orthogonal decomposition 
  $\pi_t(h_1) = t|\varsigma)(\varsigma| \oplus \pi_t(h_1)|_{\varsigma^\perp}$, the spectrum of $\pi_t(h_1)|_{\varsigma^\perp}$ is $\sigma_r$
  and therefore the spectrum of $\pi_t(h_1)$ is $\{t\} \cup \sigma_r$.
\end{Proposition}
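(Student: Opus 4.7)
The plan is to establish the non-trivial inclusion $\sigma_r \subseteq \sigma(\pi_c(h_1)|_{\varsigma^\perp})$; the reverse inclusion is already supplied by Theorem~\ref{SP}, so combined with the one-dimensional summand yielding eigenvalue $c$ on $\C\varsigma$, the orthogonal decomposition will force $\sigma(\pi_c(h_1)) = \{c\} \cup \sigma_r$.

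I would carry this out in two steps: first pin down the spectrum at the base point $c = s$, then propagate to all $c \in [-1,1]$ using the finite-rank structure of the deformation $(\pi_c)$. At $c = s$, Corollary~\ref{singularity}(ii) already yields $\sigma(\pi(h_1)) = \sigma_r = [-s,s]$. Because $\sigma_r$ is a non-degenerate interval with no isolated points, and because the eigenvalue $s$ attached to $\varsigma$ sits at the boundary of $\sigma_r$ and hence is a limit point of it, a standard Weyl-sequence argument—decomposing any approximate eigenvector into its $\varsigma$-component and its orthogonal projection—produces approximate eigenvectors living inside $\varsigma^\perp$ for every $\mu \in \sigma_r \setminus \{s\}$, and then by closure for $\mu = s$ as well. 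Hence $\sigma(\pi(h_1)|_{\varsigma^\perp}) = \sigma_r$, and since this set has no isolated points it also equals $\sigma_{\text{ess}}(\pi(h_1)|_{\varsigma^\perp})$.

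For arbitrary $c \in [-1,1]$, property (i) of the analytic family gives that $\pi_c(h_1) - \pi(h_1)$ has finite rank, and Theorem~\ref{SP} applied at both $c$ and $s$ makes $\varsigma^\perp$ reducing for each of $\pi_c(h_1)$ and $\pi(h_1)$, so the restricted difference $\pi_c(h_1)|_{\varsigma^\perp} - \pi(h_1)|_{\varsigma^\perp}$ is again of finite rank, in particular compact. Weyl's theorem on invariance of the essential spectrum under compact perturbation then yields
\[
\sigma_{\text{ess}}(\pi_c(h_1)|_{\varsigma^\perp}) = \sigma_{\text{ess}}(\pi(h_1)|_{\varsigma^\perp}) = \sigma_r,
\]
so $\sigma_r \subseteq \sigma(\pi_c(h_1)|_{\varsigma^\perp})$, and equality with $\sigma_r$ follows.

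The main obstacle I anticipate is the base step: one must verify that deleting the rank-one eigenprojection onto $\varsigma$ does not shrink the spectrum of $\pi(h_1)$. This relies essentially on the geometric fact that $s = 2\sqrt{r(1-r)}$ is the extreme point of the continuous band $\sigma_r$ and is therefore not isolated in it; without this property the restriction could conceivably lose the spectral value $s$, and the bootstrap to general $c$ would collapse. Everything downstream of that step is routine spectral perturbation theory.
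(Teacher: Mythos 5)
Your proof is correct and follows essentially the same route as the paper: both arguments identify the essential spectrum with $\sigma_r$ via the coincidence of $\ker\pi$ with $\ker\lambda$ and then transport this to every $c$ by Weyl's stability theorem applied to the finite-rank perturbations $\pi_c(h_1)-\pi(h_1)$, closing with the inclusion $\sigma(\pi_c(h_1)|_{\varsigma^\perp})\subset\sigma_r$ from Theorem~\ref{SP}. The only cosmetic difference is that you restrict to $\varsigma^\perp$ before invoking Weyl (working from the single base point $c=s$), whereas the paper applies Weyl on all of $\sH$ for $c\in\sigma_r$ and discards the rank-one summand afterwards.
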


\begin{proof}
  Let $\sigma$ ($\sigma'$) be the (essential) spectrum of $\pi_t(h_1)$ on $\sH$,
  i.e., $\sigma'$ consists of accumulation points of $\sigma$
  or eigenvalues of infinite multiplicity of $\pi_t(h_1)$.
  Since $\pi_t(h_1)$ for various $t$ coincide up to finite-rank operators,
  $\sigma'$ does not depend on $t \in [-1,1]$ (Weyl's stability theorem, see \cite{RS} XIII.4 for example).

By Proposition~\ref{prim},
$\ker\pi_t = \ker\lambda$ for $t \in \sigma_r$ and the spectrum of $\pi_t(h_1)$ in $\pi_t(C^*(G))$ 
is equal to that of $\lambda(h_1)$ in $C^*_r(G)$.
Consequently, if $t \in \sigma_r$, $\sigma = \sigma_r$ and hence $\sigma' = \sigma_r$.

Now $\sigma(\pi_t(h_1)|_{\varsigma^\perp}) \subset \sigma_r$ (Theorem~\ref{SP}) is combined with
$\sigma_r = \sigma' \subset \sigma(\pi_t(h_1)|_{\varsigma^\perp})$ to conclude that $\sigma(\pi_t(h_1)|_{\varsigma^\perp}) = \sigma_r$. 
%
\end{proof}

\begin{Proposition}\label{calkin}
  The continuous family $\pi_t$ ($-1 \leq t \leq 1$) of representations gives rise to the same *-homomorphism $C^*(G) \to \sB(\sH)/\sC(\sH)$
  by taking the quotient to the Calkin algebra $\sB(\sH)/\sC(\sH)$.
  
  In particular, we have $\pi_t(\ker\pi) \subset \sC(\sH)$ for $-1 \leq t \leq 1$.
  Recall that $\pi = \pi_s$ with $s = {2\sqrt{r(1-r)}}$ and $\ker\pi_t = \ker\pi$ for $t \in \sigma_r$. 
\end{Proposition}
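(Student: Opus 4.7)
The plan is to use property (i) of the family $(\pi_c)$ almost directly, treating the ``Calkin quotient'' as the natural object on which the finite-rank differences disappear. Let $q : \sB(\sH) \to \sB(\sH)/\sC(\sH)$ denote the quotient map. For $c, c' \in [-1,1]$, each representation $\pi_c$ is unitary by property (iii), so both $q \circ \pi_c$ and $q \circ \pi_{c'}$ extend to contractive $*$-homomorphisms $C^*(G) \to \sB(\sH)/\sC(\sH)$.

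The first step is to show that these homomorphisms coincide on the dense subalgebra $\C G$. By property (i), for every $g \in G$ the operator $\pi_c(g) - \pi_{c'}(g)$ vanishes on the orthogonal complement of the finite-dimensional subspace $\sH_g$, hence is of finite rank and in particular compact. Therefore $q(\pi_c(g)) = q(\pi_{c'}(g))$ for all $g \in G$, and by linearity $q \circ \pi_c = q \circ \pi_{c'}$ on $\C G$.

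The second step is to pass from $\C G$ to $C^*(G)$ by continuity. Since $q \circ \pi_c$ and $q \circ \pi_{c'}$ are $*$-homomorphisms of C*-algebras, they are norm continuous, and they agree on the dense subalgebra $\C G$, so they agree on all of $C^*(G)$. In particular, specializing $c' = s = 2\sqrt{r(1-r)}$ yields $q \circ \pi_c = q \circ \pi$ for every $c \in [-1,1]$.

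For the final assertion, if $x \in \ker \pi$ then $\pi(x) = 0$, whence $q(\pi_c(x)) = q(\pi(x)) = 0$, which is exactly the statement $\pi_c(x) \in \sC(\sH)$. There is essentially no obstacle here: the only point worth flagging is that property (i) is stated element-wise on $G$, so the extension to $\C G$ uses only linearity and the extension to $C^*(G)$ uses only boundedness of $q \circ \pi_c$, both of which are automatic.
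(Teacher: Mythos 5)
Your argument is correct and is essentially the paper's own proof in slightly different packaging: the paper shows directly that $\pi_c(x)-\pi(x)$ is a norm limit of finite-rank operators (hence compact) for $x\in C^*(G)$, while you phrase the same three steps --- finite-rank differences on $G$ from property (i), linearity on $\C G$, and norm-density plus contractivity to pass to $C^*(G)$ --- as the coincidence of the continuous $*$-homomorphisms $q\circ\pi_c$ on a dense subalgebra. The deduction $\pi_c(\ker\pi)\subset\sC(\sH)$ is likewise the intended one, so nothing further is needed.
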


\begin{proof}
  Since $\pi_t(g) - \pi(g)$ is a finite-rank operator for each $g \in G$,
  the same holds for $\pi_t(x) - \pi(x)$ if $x \in \C G$.
  Since $x \in C^*(G)$ is norm-approximated by a sequence $x_n$ in $\C G$ and $\pi_t:C^*(G) \to \sB(\sH)$ is contractive in norm,
  $\lim_{n \to \infty}\| \pi_t(x_n) - \pi_t(x)\| = 0$ shows that $\pi_t(x) - \pi(x)$ is norm-approximated by
  a sequence $(\pi_t(x_n) - \pi(x_n))_{n \geq 1}$ of finite-rank operators.
  Thus $\pi_t(x) - \pi(x)$ is a compact operator on $\sH$. 
  %
\end{proof}

\begin{Corollary}\label{calkin2} For $-1 \leq t \leq 1$, we have 
  $\ker\pi = \pi_t^{-1}(\sC(\sH)) \supset \ker \pi_t$ and hence $\pi_t(\ker\pi) = \pi_t(C^*(G)) \cap \sC(\sH)$.  
\end{Corollary}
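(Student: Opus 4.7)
The plan is to derive the corollary as a direct consequence of Proposition~\ref{calkin} together with Corollary~\ref{singularity}(i), without any new analytic input. The content of the two claimed equalities is essentially bookkeeping, so I will focus on making the logical chain transparent.

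First I would establish the central equality $\ker\pi=\pi_c^{-1}(\sC(\sH))$ by two inclusions. The inclusion $\ker\pi\subset\pi_c^{-1}(\sC(\sH))$ is immediate from Proposition~\ref{calkin}, which says exactly $\pi_c(\ker\pi)\subset\sC(\sH)$. For the reverse inclusion, take $x\in C^*(G)$ with $\pi_c(x)\in\sC(\sH)$. By Proposition~\ref{calkin}, the difference $\pi(x)-\pi_c(x)$ is compact, so $\pi(x)\in\sC(\sH)$ as well. But $s=2\sqrt{r(1-r)}\in\sigma_r$, so Corollary~\ref{singularity}(i) applied to $c=s$ gives $\pi(C^*(G))\cap\sC(\sH)=\{0\}$. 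Hence $\pi(x)=0$, i.e.\ $x\in\ker\pi$.

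Next, the inclusion $\ker\pi\supset\ker\pi_c$ follows trivially: if $\pi_c(x)=0$ then $\pi_c(x)\in\sC(\sH)$, and by the equality just proved $x\in\ker\pi$.

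Finally, for the identity $\pi_c(\ker\pi)=\pi_c(C^*(G))\cap\sC(\sH)$, the inclusion $\subset$ is again Proposition~\ref{calkin}. Conversely, any element of $\pi_c(C^*(G))\cap\sC(\sH)$ is of the form $\pi_c(x)$ with $x\in\pi_c^{-1}(\sC(\sH))=\ker\pi$, which is exactly the statement $\pi_c(x)\in\pi_c(\ker\pi)$.

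There is no real obstacle here; the only subtle point worth flagging in the write-up is the use of the particular value $c=s\in\sigma_r$ when invoking Corollary~\ref{singularity}(i), since that is what makes $\pi$ itself compact-free and thus allows one to promote ``$\pi(x)$ compact'' to ``$\pi(x)=0$''.
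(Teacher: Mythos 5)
Your proof is correct and rests on exactly the same two ingredients as the paper's: Proposition~\ref{calkin} (that $\pi_c(x)-\pi(x)$ is compact for every $x\in C^*(G)$) and Corollary~\ref{singularity}(i) applied at the parameter $s=2\sqrt{r(1-r)}\in\sigma_r$ (that $\pi(C^*(G))\cap\sC(\sH)=\{0\}$). The paper packages the argument as a chain of isomorphisms $C^*(G)/\pi_c^{-1}(\sC(\sH))\cong\cdots\cong C^*(G)/\ker\pi$ using $\pi_c(C^*(G))+\sC(\sH)=\pi(C^*(G))+\sC(\sH)$, whereas you unwind the same content elementwise, which is a transparent and equally valid presentation.
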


\begin{proof}
Since $\pi(C^*(G)) \cap \sC(\sH) = 0$ (Corollary~\ref{singularity}) and 
  \[
    \pi_t(C^*(G)) + \sC(\sH) = \pi(C^*(G)) + \sC(\sH), 
  \]
  we see
   \begin{align*}
    C^*(G)/\pi_t^{-1}(\sC(\sH)) &\cong
                                  \pi_t(C^*(G))/ \bigl(\pi_t(C^*(G)) \cap \sC(\sH)\bigr)\\
    &\cong  \bigl(\pi_t(C^*(G)) + \sC(\sH)\bigr)/ \sC(\sH)\\
    &\cong \pi(C^*(G)) \cong C^*(G)/\ker\pi, 
  \end{align*}
  which means that $\ker\pi = \pi_t^{-1}(\sC(\sH)) \supset \ker\pi_t$.
\end{proof}

By the continuity of $\pi_t(g)$ 
in $t \in [-1,1]$ for each $g \in G$,
a norm-continuous family $(\pi_t(x))_{-1 \leq t \leq 1}$ of operators in $\sB(\sH)$ is associated to any $x \in C^*(G)$
as a uniform limit of continuous functions and 
the correspondence $x \mapsto (\pi_t(x))$ defines a *-homomorphism $\pi_*$ of $C^*(G)$ into
$C([-1,1])\otimes \sB(\sH)$ so that $\ker\pi_* = \bigcap_{-1 \leq t \leq 1} \ker \pi_t = \bigcap_{t \in (-1,1) \setminus \sigma_r} \ker \pi_t$ and
$\pi_*(\ker\pi) \subset C([-1,1])\otimes \sC(\sH)$ by Corollary~\ref{calkin2}.

In view of $\ker\pi_t = \ker\pi$ ($t \in \sigma_r$), $\pi_*(\ker\pi)$ is in fact
included in the ideal $C_{\sigma_r}([-1,1])\otimes \sC(\sH)$ of $C([-1,1])\otimes \sC(\sH)$, where 
\[
  C_{\sigma_r}([-1,1]) = \{ f \in C([-1,1]); f|_{\sigma_r} = 0\} \cong C_0([-1,1] \setminus \sigma_r)
\]
is the ideal of $C([-1,1])$ vanishing on $\sigma_r \subset [-1,1]$. 

Recall also that $\pi_t$ is irreducible for $-1 < t < 1$,
whereas $\pi_{\pm 1}$ is decomposed as $\varepsilon_{\pm 1} \oplus \pi_\pm'$
according to the orthogonal decomposition $\sH = \C \varsigma \oplus \varsigma^\perp$.

Choose $f \in C_{\sigma_r}([-1,1])$ so that $f(\pm 1) = 1$.
By Theorem~\ref{SP} or Proposition~\ref{hspectrum}, 
$f(h_1) \in A$ satisfies $\pi_{\pm1}(f(h_1)) = |\varsigma)(\varsigma|$ and then
\[
  \pi_{\pm 1}\bigl(z f(h_1) + x(1-f(h_1))\bigr) = z|\varsigma)(\varsigma| + \pi_{\pm 1}(x) (1-|\varsigma)(\varsigma|)
  = z|\varsigma)(\varsigma| + \pi_{\pm}'(x)
\]
($z \in \C, x \in C^*(G)$) reveals that 
$\pi_{\pm 1}(C^*(G)) = \C|\varsigma)(\varsigma| \oplus \pi_\pm'(C^*(G))$.

Consequently, Corollary~\ref{calkin2} is used again to see    
\begin{align*}
  \ker\pi &= \pi_{\pm 1}^{-1}(\sC(\sH)) = \pi_{\pm 1}^{-1}\Bigl(\C |\varsigma)(\varsigma| \oplus \pi_\pm'(C^*(G)) \cap \sC(\varsigma^\perp)\Bigr)\\
          &= (\pi_\pm')^{-1}(\sC(\varsigma^\perp)) \supset \ker \pi_\pm' 
\end{align*}
and hence $\ker\pi/\ker\pi_\pm' \cong \pi_\pm'(\ker\pi) = \pi_\pm'(C^*(G)) \cap \sC(\varsigma^\perp)$.

Define the residual representation of $C^*(G)$ on $\varsigma^\perp \oplus \varsigma^\perp$
to be a direct sum $\pi' = \pi_+' \oplus \pi_-'$, 
which satisfies $\ker\pi' = \ker \pi_+' \cap \ker \pi_-' \subset \ker\pi$,
$\pi'(\ker\pi) \subset \pi'(C^*(G)) \cap \bigl(\sC(\varsigma^\perp) \oplus \sC(\varsigma^\perp)\bigr)$ and $\pi_*(\ker\pi') \subset C$ with
\[
C = \{ f \in C_{\sigma_r}([-1,1])\otimes \sC(\sH); f(\pm 1) \in \C |\varsigma)(\varsigma|\}
\]
a C*-subalgebra of $C_{\sigma_r}([-1,1])\otimes \sC(\sH)$. 


\begin{Proposition}\label{prim2}
  We have the equality $\pi_*(\ker\pi') = C$ and the associated isomorphism $C \cong \ker\pi'/\ker\pi_*$ induces
  a bijection between $\text{Prim}(C)$ and $\text{Prim}(\ker\pi') \setminus \text{Prim}(\ker\pi_*) \subset \text{Prim}(C^*(G))$
  so that $\text{Prim}(C)$ is the set of primitive ideals of spherical representations in the complementary series, i.e., 
  \[
  \{ \ker \pi_t; t \in (-1,1) \setminus \sigma_r \} \sqcup \{ \ker\varepsilon_{\pm 1}\}. 
  \]
\end{Proposition}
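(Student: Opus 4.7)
The plan is to prove the identity $\pi_*(\ker\pi') = R$ and then derive the primitive-ideal statement from the first isomorphism theorem together with the standard identification of $\text{Prim}(I)$ with $\{P \in \text{Prim}(A) : P \not\supseteq I\}$ for an ideal $I \subseteq A$.

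The inclusion $\pi_*(\ker\pi') \subseteq R$ assembles ingredients already in place: $\pi_*(\ker\pi') \subseteq \pi_*(\ker\pi) \subseteq C_{\sigma_r}([-1,1])\otimes\sC(\sH)$ from the discussion preceding the proposition, and the observation that for $x \in \ker\pi' = \ker\pi_+'\cap\ker\pi_-'$ the decomposition $\pi_{\pm 1}(x) = \epsilon_{\pm 1}(x)|\varsigma)(\varsigma|\oplus\pi_\pm'(x)$ collapses to $\epsilon_{\pm 1}(x)|\varsigma)(\varsigma| \in \C|\varsigma)(\varsigma|$, matching the defining boundary condition of $R$.

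The reverse inclusion $R \subseteq \pi_*(\ker\pi')$ is the heart of the matter. Since the image of a $*$-homomorphism of C*-algebras is automatically closed, it suffices to approximate elements of $R$ in norm, and my plan is a local-lifting/partition-of-unity argument. For each $t_0 \in (-1,-s)\cup(s,1)$, Proposition~\ref{regularity} together with Corollary~\ref{calkin2} gives surjectivity of $\pi_{t_0}|_{\ker\pi} : \ker\pi\to\sC(\sH)$, so any prescribed compact operator at $t_0$ is attained by some lift. Functional calculus applied to $h_1$ inside the radial subalgebra $\sA$ — using the spectral information of Proposition~\ref{hspectrum}, namely $\sigma(\pi_t(h_1)) = \{t\}\cup\sigma_r$ with simple eigenvalue $t$ on $\varsigma$ — produces elements of $A$ whose $\pi_*$-images serve as scalar cut-offs vanishing on $\sigma_r$ and concentrated near a chosen $t_0 \in [-1,1]\setminus\sigma_r$. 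Multiplying lifts by such cut-offs, summing finitely many contributions and passing to the norm limit delivers any target vanishing at $\pm 1$; the boundary contributions at $\pm 1$ are supplied by choosing an $x\in\ker\pi'$ with $\epsilon_{\pm 1}(x)\neq 0$, which exists because $\ker\pi' \not\subseteq \ker\epsilon_{\pm 1}$.

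Once $\pi_*(\ker\pi') = R$ is established, the first isomorphism theorem yields $R \cong \ker\pi'/\ker\pi_*$. Primitive ideals of the quotient correspond to primitive ideals of $\ker\pi'$ containing $\ker\pi_*$, which via the ambient identification become $\text{Prim}(\ker\pi')\setminus\text{Prim}(\ker\pi_*) \subseteq \text{Prim}(C^*(G))$. Finally, $\text{Prim}(R)$ is read off fibre by fibre: for $t_0 \in (-1,-s)\cup(s,1)$ the evaluation quotient is the simple algebra $\sC(\sH)$, contributing $\ker\pi_{t_0}$ on pull-back; at $\pm 1$ the evaluation quotient is $\C$, contributing $\ker\epsilon_{\pm 1}$; fibres over $\sigma_r$ vanish, giving the announced list. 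The main obstacle is the cut-off/lifting step: verifying that radial functional-calculus on $h_1$ yields the required localization while keeping everything inside $\ker\pi'$ rather than just $\ker\pi$ is the key technical point, relying crucially on the spectral structure of Proposition~\ref{hspectrum} and on the non-vanishing of $\epsilon_{\pm 1}|_{\ker\pi'}$.
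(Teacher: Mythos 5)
Your overall architecture (easy inclusion into $R$, hard reverse inclusion, then the standard bookkeeping identifying $\text{Prim}(R)$ fibrewise and pulling it back through $\text{Prim}(\ker\pi')\setminus\text{Prim}(\ker\pi_*)$) matches the shape of what is needed, and the easy inclusion and the final primitive-ideal bookkeeping are fine. But the key step of your reverse inclusion has a genuine gap. Your ``scalar cut-offs'' obtained by radial functional calculus do not exist in the form you need: for $f\in C_{\sigma_r}([-1,1])$ one has $\pi_t(f(h_1))=f(t)\,|\varsigma)(\varsigma|$ (Proposition~\ref{regularity}), a rank-one operator, not $f(t)\cdot 1_\sH$. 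So multiplying a lift $x$ (with $\pi_{t_0}(x)$ a prescribed compact operator) by $f(h_1)$ does not localize $\pi_t(x)$ in $t$; it collapses it to the rank-one operator $f(t)\,\pi_t(x)|\varsigma)(\varsigma|$. Worse, no genuinely scalar central cut-offs can be produced inside $\pi_*(C^*(G))$ at all, since a non-constant $t\mapsto f(t)1_\sH$ would be a nontrivial central element of $C^*_{\text{rad}}(G)$, and the paper's final corollary shows that center is trivial. So the partition-of-unity argument as you describe it cannot be run with the tools you cite; this is precisely the obstacle you flag but do not resolve.

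The paper sidesteps this entirely: it first shows $R$ is CCR by exhibiting the composition series $R_0\cong C_0((-1,1)\setminus\sigma_r)\otimes\sC(\sH)$, $R/R_0\cong\C\oplus\C$, and then invokes Kaplansky's Stone--Weierstrass theorem for CCR algebras, which reduces the equality $\pi_*(\ker\pi')=R$ to checking that the pure states of $R$ (evaluations at $c\in[-1,1]\setminus\sigma_r$ followed by vector states) are separated by $\pi_*(\ker\pi')$. That separation is done explicitly with sandwiched elements $yf(h_1)y^*$, where $y$ is chosen by Kadison transitivity so that $\pi_t(y)=|\eta)(\varsigma|$. Your argument could in principle be repaired along similar lines without Stone--Weierstrass --- approximating a target $f\otimes K$ ($K$ finite rank, $f$ supported near $t_0$) by finite sums $\sum_i y_i f(h_1) z_i^*$, whose fibres $f(t)\sum_i|\pi_t(y_i)\varsigma)(\pi_t(z_i)\varsigma|$ stay close to $f(t)K$ on a small neighbourhood of $t_0$ by norm-continuity of $t\mapsto\pi_t(y)\varsigma$ --- but that is a different mechanism from ``lift times cut-off'', and as written your key step does not go through.
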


\begin{proof}
  We first describe pure states of $C$.
  Since a closed ideal of $C_0 = \{ f \in C; f(\pm 1) = 0\}$ of $C$ is isomorphic to $C_0((-1,1) \setminus \sigma_r)\otimes \sC(\sH)$ and
  satisfies $C/C_0 \cong \C \oplus \C$, irreducible representations of $C$ are given by evaluation at $t \in [-1,1] \setminus \sigma_r$ and hence
  pure states of $C$ are evaluations followed by applying pure states of fibers ($\sC(\sH)$ or $\C |\varsigma)(\varsigma|$).
  In particular, $C$ is a CCR (completely continuous representation) algebra. 

  The equality then follows from the Stone-Weierstrass theorem on CCR C*-algebras due to Kaplansky
  (see \cite{AA} for a survey) if 
  these pure states 
  are separated by elements in $\pi_*(\ker\pi')$.

  To see this, we recall Proposition~\ref{regularity} that
  $\pi_t(f(h_1)) = f(t)|\varsigma)(\varsigma|$ holds for $f \in C_{\sigma_r}([-1,1])$ and $t\in [-1,1]$, whence $f(h_1) \in \ker \pi'$.
  
  Let $\varphi$ and $\psi$ be
  pure states of $C$ given by $\varphi(x) = (\xi|x(s)\xi)$ and $\psi(x) = (\eta|x(t)\eta)$, 
  where $s,t \in [-1,1]\setminus \sigma_r$ and $\xi,\eta$ are unit vectors in $\sH$.

  If $s \not= t$ with $t \in (-1,1)$, $\pi_t$ is irreducible and we can find $y \in C^*(G)$ such that
  $\pi_t(y) = |\eta)(\varsigma|$ by Kadison's transitivity\footnote{We can dispense with Kadison's transitivity
   if $\eta$ is approximated by $\pi_t(y)\varsigma$.}.
  Then $yf(h_1)y^* \in \ker\pi'$ is represented by
  \[
    \pi_s(yf(h_1)y^*) = f(s) |\pi_s(y)\varsigma)(\pi_s(y)\varsigma|, \quad 
    \pi_t(yf(h_1)y^*) = f(t) |\eta)(\eta|.
  \]
  Thus, for a choice $f \in C_{\sigma_r}([-1,1])$ satisfying $f(s) = 0$ and $f(t) = 1$, $\varphi$ and $\psi$ are
  separated by $\pi_*(yf(h_1)y^*)$: 
  \begin{align*}
    \varphi(\pi_*(yf(h_1)y^*)) &= (\xi|\pi_s(yf(h_1)y^*)\xi) = f(s) (\xi|\pi_s(y)\varsigma) (\pi_s(y)\varsigma|\xi) = 0,\\
    \psi(\pi_*(yf(h_1)y^*)) &= (\eta|\pi_t(yf(h_1)y^*)\eta) = f(t) (\eta|\pi_t(y)\varsigma) (\pi_t(y)\varsigma|\eta) = 1. 
  \end{align*}
  
  When both $s\not= t$ come from $\{\pm 1\}$,
  we may assume that $s = -1$, $t=1$ with $\varphi(x) = (\varsigma|x(-1)\varsigma)$,
  $\psi(x) = (\varsigma|x(1)\varsigma)$ for $x = (x(t))_{-1 \leq t \leq 1} \in C$.
  Then $\varphi(\pi_*(f(h_1))) = f(-1)$ and $\psi(\pi_*(f(h_1))) = f(1)$ and we see that $\varphi$ and $\psi$ are separated by
  $f(h_1) \in \ker\pi'$ if $f \in C_{\sigma_r}([-1,1])$ satisfies $f(-1) \not= f(1)$.

  Finally consider the case $s=t$. Since states are unique for $t = \pm 1$,
  we assume $-1 < t < 1$ with $\varphi(x) = (\xi|x(t)\xi)$ and $\psi(x) = (\eta|x(t)\eta)$.
  Then the condition $\varphi \not= \psi$ is equivalent to $|(\xi|\eta)| < 1$ and $yf(h_1)y^* \in \ker\pi'$ described above is evaluated by
  \[   
    \varphi(\pi_*(yf(h_1)y^*)) = f(t) (\xi|\eta) (\eta|\xi),
    \quad 
    \psi(\pi_*(yf(h_1)y^*)) = f(t). 
  \]
  Thus $\varphi$ and $\psi$ are separated by $yf(h_1)y^* \in \ker\pi'$ if $f(t) \not= 0$.
\end{proof}


\begin{Lemma}
We have $\ker\pi = \ker\pi'$.   
\end{Lemma}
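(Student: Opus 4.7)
The direction $\ker\pi'\subset\ker\pi$ is already recorded in the display immediately preceding the Lemma (via the equalities $\ker\pi=(\pi_\pm')^{-1}(\sC(\varsigma^\perp))\supset\ker\pi_\pm'$), so the plan focuses on the reverse inclusion $\ker\pi\subset\ker\pi'$. The isomorphism $\ker\pi/\ker\pi_\pm'\cong \pi_\pm'(C^*(G))\cap\sC(\varsigma^\perp)$ noted there reduces this to verifying
\[
\pi_\pm'(C^*(G))\cap\sC(\varsigma^\perp)=\{0\},
\]
which is the $\pi_\pm'$-analogue of Corollary~\ref{singularity}(i) for the principal series.

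First I would carry out an explicit computation on radial elements. Proposition~\ref{regularity} gives $\pi_t(f(h_1))=f(t)|\varsigma)(\varsigma|$ for $f\in C_{\sigma_r}([-1,1])$, and the block decomposition $\pi_{\pm 1}=\epsilon_{\pm 1}\oplus\pi_\pm'$ yields $\pi_{\pm 1}(g)\varsigma=\epsilon_{\pm 1}(g)\varsigma=(\pm 1)^{|g|}\varsigma$; combining these,
\[
\pi_{\pm 1}(gf(h_1)g')=f(\pm 1)\,\pi_{\pm 1}(g)|\varsigma)(\varsigma|\pi_{\pm 1}(g')=f(\pm 1)(\pm 1)^{|g|+|g'|}|\varsigma)(\varsigma|
\]
for all $g,g'\in G$. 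Hence $\pi_\pm'$ annihilates the closed two-sided ideal $I\subset C^*(G)$ generated by $\{f(h_1):f\in C_{\sigma_r}([-1,1])\}$. Since also $I\subset\ker\pi$ by Corollary~\ref{singularity}(ii), the proof reduces to the identification $I=\ker\pi=\ker\lambda$.

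The next step would be to argue $I=\ker\lambda$ by analyzing the primitive ideal space of $C^*(G)/I$. Any irreducible representation $\rho$ of $C^*(G)$ vanishing on $I$ satisfies $\sigma(\rho(h_1))\subset\sigma_r$, and I would argue, via a Plancherel-type decomposition of its cyclic subrepresentations over the radial parameter space together with the classification of spherical irreducible representations from Section~1, that such a $\rho$ is weakly contained in the regular representation, i.e.\ $\ker\rho\supset\ker\lambda$. The Jacobson-radical identity $I=\bigcap_{P\supset I}P$ then gives $I\supset\ker\lambda$, hence equality.

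The hard part will be this last classification step, since free groups are not type I and a generic irreducible representation of $C^*(G)$ with $h_1$-spectrum in $\sigma_r$ is not obviously weakly contained in $\lambda$. One viable route uses the CCR description $\pi_*(\ker\pi')=R$ from Proposition~\ref{prim2}: its primitive ideal space exhausts those primitives of $C^*(G)$ relevant beyond $\ker\pi$, so the simplicity of $C^*(G)/\ker\pi=C^*_r(G)$ combined with this CCR structure should eliminate any remaining non-principal irreducible contribution to $\ker\pi/I$.
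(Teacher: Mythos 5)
Your setup is fine as far as it goes: the computation $\pi_{\pm 1}(gf(h_1)g') = f(\pm 1)(\pm 1)^{|g|+|g'|}|\varsigma)(\varsigma|$ is correct and shows that the closed ideal $I$ generated by $\{f(h_1); f\in C_{\sigma_r}([-1,1])\}$ is annihilated by $\pi'$, so the Lemma would indeed follow from $I=\ker\lambda$. But this is not a reduction to anything more tractable --- the Remark immediately after the Lemma records $I=\ker\pi$ as an \emph{equivalent reformulation} of the Lemma, and your remaining task (every irreducible representation $\rho$ with $\rho(I)=0$, i.e.\ with $\sigma(\rho(h_1))\subset\sigma_r$, satisfies $\ker\rho\supset\ker\lambda$) is exactly where all the difficulty sits. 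The ``Plancherel-type decomposition of its cyclic subrepresentations over the radial parameter space'' you invoke is not available here: the Plancherel formula of Section~1 applies only to states of the form $\varphi\circ E$ with $\varphi$ a positive functional on $A$, and a general state of such a $\rho$ is not determined by its restriction to $A$, precisely because $G$ is not type I. Your fallback does not close the gap either: Proposition~\ref{prim2} computes $\text{Prim}$ of $\ker\pi'/\ker\pi_*$, which says nothing about a hypothetical primitive ideal $P\supset I$ with $P\not\supset\ker\lambda$; such a $P$ need not contain $\ker\pi_*$, so it is invisible to $R$, and the simplicity of $C^*_r(G)$ only says that $\ker\lambda$ is maximal, not that no exotic $P$ exists.

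The idea you are missing is the paper's use of the Pytlik--Szwarc deformation $(\lambda_u)_{-1\le u\le 1}$ of the regular representation. Since $\lambda_u(x)-\lambda(x)\in\sC(\ell^2(G))$ for $x\in C^*(G)$ and $\ker\lambda_u=\ker\pi_{c_r(u)}$ for $-1<u<1$ by Theorem~\ref{SS}, one obtains a $*$-homomorphism $\lambda_*$ with $\ker\lambda_*=\ker\pi_*$ and $\lambda_*(\ker\lambda)\subset C([-1,1])\otimes\sC(\ell^2(G))$. Hence $\ker\pi/\ker\pi_*=\ker\lambda/\ker\lambda_*$ is itself a CCR algebra whose primitive ideals can be read off fiberwise from the known decompositions of the $\lambda_t$, yielding exactly $\{\ker\pi_t;\ t\in(-1,1)\setminus\sigma_r\}\cup\{\ker\epsilon_{\pm 1}\}$ --- the same list as $\text{Prim}(\ker\pi'/\ker\pi_*)$ from Proposition~\ref{prim2}. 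Two nested closed ideals of $C^*(G)/\ker\pi_*$ with the same hull coincide, which gives $\ker\pi=\ker\pi'$. The whole point of this detour is that it never requires classifying arbitrary irreducible representations whose $h_1$-spectrum lies in $\sigma_r$; your proposal does require that classification, and supplies no argument for it.
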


\begin{proof}
  To see this, we use the continuous deformation $(\lambda_\upsilon)_{-1 \leq \upsilon \leq 1}$ of the regular representation $\lambda = \lambda_0$ in \cite{PS}. 
  Recall that these are unitary representations of $G$ with the following properties (Theorem~\ref{SS}): 
\begin{enumerate}
  \item
$\lambda_\upsilon$ is unitarily equivalent to $\lambda$ if $\upsilon \in \sigma_r$, 
to $\pi_\upsilon \oplus \lambda$ if $\upsilon \in (-1,1) \setminus \sigma_r$
and to $\epsilon_{\pm 1} \oplus \overbrace{\lambda \oplus \dots \oplus \lambda}^{\text{$l$-times}}$ if $\upsilon = \pm 1$.
\item
  $\ker \lambda_\upsilon = \ker\pi_{c_r(\upsilon)} \cap \ker \lambda = \ker \pi_{c_r(\upsilon)} \cap \ker\pi = \ker \pi_{c_r(\upsilon)}$ if $|\upsilon| > \sqrt{r/(1-r)}$,
  $\ker\lambda_\upsilon = \ker\lambda$ if $|\upsilon| \leq \sqrt{r/(1-r)}$, and 
  $\ker\lambda_{\pm 1} = \ker\epsilon_{\pm 1} \cap \ker\lambda = \ker \epsilon_{\pm 1} \cap \ker\pi$.
  Notice that $c_r(\upsilon) \not\in \sigma_r$ for $\upsilon \not= \pm \sqrt{r/(1-r)}$. 
\end{enumerate}
($\pi_\pm'$ would be unitarily equivalent to $\lambda \otimes 1_{\C^{l-1}}$ but its validity is irrelevant here.)

As observed for $(\pi_t)$ before, the family $(\lambda_\upsilon)_{-1 \leq \upsilon \leq 1}$ of *-representations of $C^*(G)$ satisfies
$\lambda_\upsilon(x) - \lambda(x) \in \sC(\ell^2(G))$ for $x \in C^*(G)$ and gives rise to
a *-homomorphism $\lambda_*: C^*(G) \to C([-1,1]) \otimes \sB(\ell^2(G))$ in such a way that
$\lambda_*(\ker\lambda) = \lambda_*(\ker\pi) \subset C([-1,1])\otimes \sC(\ell^2(G))$.
In view of norm-continuity of $\lambda_\upsilon(x)$ in $\upsilon \in [-1,1]$ and Theorem~\ref{SS}, 
\[
  \ker \lambda_* = \bigcap_{\sqrt{r/(1-r)} < |\upsilon| < 1} \ker \lambda_\upsilon
  = \bigcap_{4r(1-r) < t^2 < 1} \ker \pi_t
  = \ker \pi_*
\]
and $\lambda_*(\ker\lambda) = \lambda_*(\ker\pi) \subset C([-1,1]) \otimes \sC(\ell^2(G))$.
Note here that $\{ c_r(\upsilon); \sqrt{r/(1-r)} < |\upsilon| < 1\} = \{ t \in \R; 4r(1-r) < t^2 < 1\}$.

Thus pure states of $\ker\pi/\ker\pi_* = \ker\lambda/\ker\lambda_* \cong \lambda_*(\ker\lambda)$
are given by pure states of $\lambda_\upsilon(\ker\lambda)$ 
after the evaluation at $\upsilon$ satisfying $|\upsilon| > \sqrt{r/(1-r)}$. Here recall that
\[
  \lambda_\upsilon(\ker\lambda) \cong \begin{cases} 0 &(|\upsilon| \leq \sqrt{r/(1-r)})\\
                                        \pi_{c_r(\upsilon)}(\ker\pi) &(\sqrt{r/(1-r)}  < |\upsilon| < 1)\\
                                 \C &(\upsilon = \pm 1)
                               \end{cases}. 
\]

Consequently the set of associated primitive ideals of $\ker\lambda/\ker\lambda_* = \ker\pi/\ker\pi_*$ is 
  \[
    \{ \ker \pi_t; t \in (-1,1) \setminus \sigma_r \} \cup \{ \ker\epsilon_{\pm 1}\}. 
  \]
  At this point, there might be overlapping in the union but the comparison of this with Proposition~\ref{prim2} enables us
  to conclude that these are in fact distinct and $\ker\pi/\ker\pi' = 0$.
\end{proof}

{\small
\begin{Remark}
We have an inclusion $\overline{C^*(G)f(h_1) C^*(G)} \subset \ker\pi$ for $f \in C_{\sigma_r}([-1,1])$ and 
the equality $\ker\pi' = \ker\pi$ is rephrased by 
\[
\ker\pi = \overline{\bigcup_{f \uparrow [-1,1]\setminus \sigma_r} C^*(G)f(h_1) C^*(G)}. 
\]
Here $f \uparrow [-1,1]\setminus \sigma_r$ means an inductive limit on $f \in C_{\sigma_r}([-1,1])$ satisfying $0 \leq f \leq 1$.
\end{Remark}}

As a summary of consideration so far, 

\begin{Theorem}
  Primitive ideals of the radial C*-algebra $C^*_{rad}(G)$ are exactly kernels of spherical representations of $G$:
  \[
    \text{Prim}(C^*_{rad}(G)) = \{ \ker\pi\} \sqcup \{ \ker \pi_t; t \in (-1,1) \setminus \sigma_r \} \sqcup \{ \ker\varepsilon_{\pm 1}\}
  \]
  with the primitive ideal space of $\ker \lambda/\ker \lambda_* = \ker\pi/\ker\pi_*$ identified with
$\{ \ker \pi_t; t \in (-1,1) \setminus \sigma_r \} \sqcup \{ \ker\varepsilon_{\pm 1}\}$. 
\end{Theorem}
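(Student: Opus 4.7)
The plan is to combine the exact sequence
\[
0 \to \ker\pi/\ker\pi_* \to C^*_{rad}(G) \to C^*(G)/\ker\pi \to 0
\]
(where $C^*_{rad}(G) = \pi_*(C^*(G)) \cong C^*(G)/\ker\pi_*$) with the standard bijection between $\mathrm{Prim}(C^*_{rad}(G))$ and the disjoint union $\mathrm{Prim}(\ker\pi/\ker\pi_*) \sqcup \mathrm{Prim}(C^*(G)/\ker\pi)$. Primitive ideals of $C^*_{rad}(G)$ correspond to primitive ideals of $C^*(G)$ containing $\ker\pi_*$, split according to whether they contain the ideal $\ker\pi/\ker\pi_*$ or not.

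First I would handle the quotient $C^*(G)/\ker\pi$. Since $\ker\pi = \ker\lambda$ by Proposition~\ref{prim}, this quotient is naturally isomorphic to $C^*_r(G)$, which is simple by Powers' theorem for free groups. Hence the only primitive ideal of $A/I$ is $\{0\}$, which pulls back to $\ker\pi \in \mathrm{Prim}(C^*(G))$ itself. This accounts for the isolated summand $\{\ker\pi\}$ in the statement.

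Next I would handle the ideal $\ker\pi/\ker\pi_*$. By the Lemma we have $\ker\pi = \ker\pi'$, so $\pi_*$ identifies this quotient with $\pi_*(\ker\pi') \subset C([-1,1]) \otimes \sB(\sH)$, and Proposition~\ref{prim2} tells us precisely that $\pi_*(\ker\pi') = R$ with
\[
\mathrm{Prim}(R) = \{\ker\pi_t ; t \in (-1,1) \setminus \sigma_r\} \sqcup \{\ker\epsilon_{\pm 1}\}
\]
viewed as a subset of $\mathrm{Prim}(C^*(G))$. Combining these two pieces via the correspondence of primitive ideals in the exact sequence yields the desired parametrization of $\mathrm{Prim}(C^*_{rad}(G))$ and simultaneously identifies the primitive ideal space of $\ker\pi/\ker\pi_* \cong R$ with the second and third summands.

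I expect the main subtlety to be bookkeeping rather than content: one must verify that the three families of ideals listed are pairwise distinct (disjointness of $\epsilon_{\pm 1}$ from every $\pi_t$ with $-1<t<1$ comes from Proposition~\ref{hspectrum} and Corollary~\ref{ends}; distinctness among the $\pi_t$ for $t \in (-1,1)\setminus\sigma_r$ is Proposition~\ref{prim}; their distinctness from $\ker\pi$ follows because $\pi$ admits no compact operators in its image while each $\pi_t$ with $t \notin \sigma_r$ contains $\sC(\sH)$ by Proposition~\ref{regularity}), and that the correspondence between $\mathrm{Prim}$ of an ideal and primitive ideals of the ambient algebra not containing it really does send $P \in \mathrm{Prim}(R)$ to the preimage in $C^*(G)$ listed above. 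Since Proposition~\ref{prim2} has already been phrased so that primitive ideals of $R$ are labeled by their preimages in $C^*(G)$, the assembly is then automatic.
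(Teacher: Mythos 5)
Your assembly is exactly the one the paper intends: the theorem is stated there ``as a summary of consideration so far,'' and the ingredients you combine --- the ideal decomposition $\text{Prim}(C^*_{rad}(G)) = \text{Prim}(\ker\pi/\ker\pi_*) \sqcup \text{Prim}(C^*(G)/\ker\pi)$, simplicity of $C^*(G)/\ker\pi \cong C^*_r(G)$ for the isolated point $\ker\pi$, and the Lemma $\ker\pi = \ker\pi'$ together with Proposition~\ref{prim2} identifying $\ker\pi/\ker\pi_* \cong R$ with $\text{Prim}(R)$ the set of spherical kernels --- are precisely the results being summarized. The distinctness bookkeeping you flag at the end is also handled correctly, so the proposal is a faithful (and slightly more explicit) rendering of the paper's argument.
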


We now look into the topology of the primitive ideal space $\Delta \equiv \text{Prim}(C^*_{rad}(G))$,
which is a closed subset of $\text{Prim}(C^*(G))$.
Since primitive ideals in $\Delta$ are of the form $[t] = \ker\pi_t$ ($-1 < t < 1$) or $[\pm 1] = \ker\varepsilon_{\pm 1}$,
$\Delta$ is identified with a quotient of $[-1,1]$ in such a way that $\sigma_r$ is shrunken to one point in $\Delta$.

We first check the continuity of $[t] \in \Delta$ in $t \in [-1,1]$: 
If a sequence $(t_n)$ in $[-1,1]$ converges to $t \in [-1,1]$, then  
$\bigcap_{n \geq 1} \ker \pi_{t_n} \subset \ker \pi_t$ because 
$x \in \bigcap_{n \geq 1} \ker \pi_{t_n}$ and $y \in C^*(G)$ satisfy 
\[
  \| \pi_t(xy)\varsigma\| = \| xy \epsilon_t^{1/2}\| = \lim_{n \to \infty} \| xy\epsilon_{t_n}^{1/2} \|
  = \lim_{n \to \infty} \| \pi_{t_n}(x) \pi_{t_n}(y) \varsigma \| = 0.
\]

\begin{Theorem}\label{closure}
For a non-empty subset $T$ of $(-1,1) \setminus \sigma_r$, we have 
\[
  \overline{\{[t]; t \in T \}} = \{ [t]; t \in \overline{T}\} \cup \{ \ker\pi\}
\]
in $\Delta = \text{Prim}(C^*_\text{rad}(G))$.
Here the left hand side is the closure in $\Delta$ and $\overline{T}$ denotes the closure of $T$ in $[-1,1]$.

Since $\ker \varepsilon_{\pm 1}$ and $\ker\pi$ are maximal ideals, they are closed as one-point sets.
Thus the closure operation in $\Delta$ is completely described by this.
\end{Theorem}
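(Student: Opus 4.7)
The plan is to describe the closure via the hull-kernel topology. Setting $I = \bigcap_{c \in C} \ker \pi_c$, the closure $\overline{\{[c] : c \in C\}}$ in $\Delta$ consists of those primitive ideals in $\Delta$ that contain $I$, so the task is to show
\[
\{P \in \Delta : P \supseteq I\} = \{[c] : c \in \overline{C}\} \cup \{\ker\pi\}.
\]

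For the inclusion $\supseteq$, I would handle the two pieces separately. That $\ker\pi \supseteq I$ is automatic: Corollary~\ref{calkin2} gives $\ker\pi_c \subset \ker\pi$ for every $c \in [-1,1]$, so $I \subset \ker\pi_c \subset \ker\pi$ already for a single $c \in C$. For $c_0 \in \overline{C}$, I would pick a sequence $c_n \in C$ with $c_n \to c_0$ and invoke the continuity of $[t]$ in $t$ verified just above the theorem, which gives $\bigcap_n \ker\pi_{c_n} \subset [c_0]$; since $I \subset \bigcap_n \ker\pi_{c_n}$, we conclude $I \subset [c_0]$.

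The reverse inclusion $\subseteq$ is the main content. I need to produce, for each $t \in (-1,1)\setminus(\sigma_r\cup\overline{C})$, an element of $I$ lying outside $\ker\pi_t$, and similarly for $t = \pm 1 \notin \overline{C}$, an element of $I$ outside $\ker\epsilon_{\pm 1}$. The separator will be a functional-calculus element $f(h_1)$: because $\overline{C}\cup\sigma_r$ is a closed subset of $[-1,1]$ avoiding $t$, Urysohn's lemma furnishes $f \in C([-1,1])$ with $f|_{\overline{C}\cup\sigma_r}=0$ and $f(t)\neq 0$. By Corollary~\ref{singularity}(ii), $f|_{\sigma_r}=0$ forces $f(h_1) \in \ker\pi$, and Proposition~\ref{regularity} evaluates $\pi_c(f(h_1))=f(c)|\varsigma)(\varsigma|$ for every $c \in [-1,1]$. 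Hence $\pi_c(f(h_1))=0$ for all $c \in C$, placing $f(h_1)$ in $I$, while $\pi_t(f(h_1)) = f(t)|\varsigma)(\varsigma| \neq 0$ in the interior case and $\epsilon_{\pm 1}(f(h_1)) = f(\pm 1) \neq 0$ in the boundary case, so $f(h_1)$ distinguishes $I$ from $\ker\pi_t$ (resp.\ $\ker\epsilon_{\pm 1}$).

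The only real obstacle is spotting the right family of separators. Proposition~\ref{regularity} already makes $\{f(h_1) : f \in C_{\sigma_r}([-1,1])\}$ rich enough inside $\ker\pi$ for this purpose, because its image under each $\pi_c$ is the rank-one operator $f(c)|\varsigma)(\varsigma|$ carrying the evaluation factor $f(c)$ directly; once this is recognised, both inclusions fall out of the machinery established earlier in the section.
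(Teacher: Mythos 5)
Your argument is correct and rests on the same ingredients as the paper's proof: the continuity of $t\mapsto[t]$ and Corollary~\ref{calkin2} give one inclusion, and functional-calculus elements $f(h_1)$ with $f$ vanishing on $\sigma_r\cup\overline{C}$ (via Proposition~\ref{regularity}) give the other. The only difference is that you phrase the separation through the hull-kernel closure $\{P\supseteq\bigcap_{c\in C}\ker\pi_c\}$ while the paper dually exhibits the complement $[-1,1]\setminus(\sigma_r\cup\overline{C})$ as an open subset of $\Delta$ using the norm formula from Proposition~\ref{hspectrum}; this is essentially the same proof.
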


\begin{proof}
Let us begin with showing that $\Delta$ contains relatively open intervals in $[-1,1] \setminus \sigma_r$ as open subsets of $\Delta$: 
For a continuous function $f \in C([-1,1])$, Proposition~\ref{hspectrum} gives
\[
  \| \pi_t(f(h_1))\| =
  \begin{cases}
    \max\{ |f(t')|; t' \in \{ t\} \cup \sigma_r\} &(-1 < t < 1),\\
    |\varepsilon_t(f(h_1))| = |f(\pm 1)| &(t = \pm 1). 
  \end{cases}
\]
Since relatively open subsets of $[-1,1] \setminus \sigma_r$ are then realized in the form
\[
  \{ t \in [-1,1] \setminus \sigma_r; \| \pi_t(f(h_1)) \| > 0 \}
\]
with $f = 0$ on $\sigma_r$ (relatively open subsets being disjoint unions of countably many relatively open intervals),
they are open in $\Delta$ as well
because $\Delta \ni [t] \mapsto \| \pi_t(f(h_1)\|$
is a lower semicontinuous function (\cite{Di} Proposition 3.3.2). 

Thus a relatively open subset $[-1,1] \setminus (\sigma_r \cup \overline{T})$
of $[-1,1] \setminus \sigma_r$ is open in $\Delta$ and hence
its complement $[\overline{T}] \cup \{ \ker\pi\}$ in $\Delta$ is closed. Consequently 
\[
 [\overline{T}] \subset \overline{[T]} \subset [\overline{T}] \cup \{ \ker\pi\}.
\]
Here the first inclusion is due to the continuity of $[t]$ in $t \in [-1,1]$. 
Since $T$ is non-empty, its closure $\overline{[T]}$ in $\Delta$ contains $\ker\pi$ (Corollary~\ref{calkin2}) and the assertion is proved.
\end{proof}

\begin{Corollary}
  Open sets of $\Delta$ are exactly of the following form:
  \begin{enumerate}
    \item
      $[U]$ with $U \subset [-1,1] \setminus \sigma_r$ an open subset in the relative topology of $[-1,1]$. 
    \item
      $\Delta \setminus F$ with $F$ a subset (including the empty set) of $\{ [1], [-1]\}$.
   \end{enumerate}
\end{Corollary}

\begin{Corollary}
  The radial C*-algebra $C^*_{\text{rad}}(G)$ of $G$ is isomorphic to $C^*(G)/\ker\pi_*$ and 
the center of $C^*_{\text{rad}}(G)$ is trivial.
\end{Corollary}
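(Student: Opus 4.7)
The plan is to establish the two assertions separately. For $C^*_{\text{rad}}(G) \cong C^*(G)/\ker\pi_*$, I would unpack the definition of $C^*_{\text{rad}}(G)$ as the image of $C^*(G)$ acting on the universal radial bimodule $\sR$, which was built as an inductive limit of the $\sR_\varphi$. Applying the Plancherel formula fiberwise, this action decomposes over $t \in [-1,1]$ with fiber the spherical representation $\pi_t$, so its kernel equals $\bigcap_{t \in [-1,1]} \ker\pi_t = \ker\pi_*$. This yields the stated isomorphism.

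For the triviality of the center, since $C^*_{\text{rad}}(G)$ is unital, I would appeal to the Dauns--Hofmann theorem to identify $Z(C^*_{\text{rad}}(G))$ with $C(\Delta)$, the algebra of continuous functions on the hull-kernel primitive ideal space $\Delta$. It then suffices to show that every $f \in C(\Delta)$ is constant. Set $a = f(\ker\pi)$. For any $c \in (-1,1) \setminus \sigma_r$, Theorem \ref{closure} applied to $C = \{c\}$ gives $\ker\pi \in \overline{\{[c]\}}$, which means every open neighborhood of $\ker\pi$ must contain $[c]$; continuity of $f$ at $\ker\pi$ then forces $f([c]) = a$. For the endpoints $[\pm 1]$, Theorem \ref{closure} applied to $C = (-1,1) \setminus \sigma_r$ shows $[\pm 1] \in \overline{\{[c];\, c \in (-1,1) \setminus \sigma_r\}}$, so by the elementary relation $f(\overline{S}) \subset \overline{f(S)}$ we obtain $f([\pm 1]) \in \overline{\{a\}} = \{a\}$. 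Hence $f \equiv a$ and the center reduces to $\C$.

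The main obstacle is conceptual rather than computational: one must recognize that the non-Hausdorff feature of $\Delta$ encoded in Theorem \ref{closure}, namely that $\ker\pi$ lies in the closure of every singleton $\{[c]\}$ with $c \notin \sigma_r$, is exactly what rigidifies continuous functions on $\Delta$ into constants. An alternative Schur-lemma approach, lifting $z \in Z(C^*_{\text{rad}}(G))$ to $\tilde z \in C^*(G)$, observing that $\pi_c(\tilde z)$ acts as a scalar on $\sH$ for $-1 < c < 1$ by irreducibility, and propagating that scalar through $[-1,1]$ via Proposition \ref{calkin} and the norm-continuity of $c \mapsto \pi_c(\tilde z)$, is also available but handles the reducible endpoints $\pi_{\pm 1}$ less cleanly.
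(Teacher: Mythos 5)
Your proof is correct and takes essentially the same route as the paper: the Dauns--Hofmann theorem reduces triviality of the center to showing that every continuous function on $\Delta$ is constant, which follows because $\ker\pi$ lies in the closure of each singleton $\{[c]\}$ by Theorem~\ref{closure}; your treatment of the endpoints via $C=(-1,1)\setminus\sigma_r$ is an acceptable substitute for the paper's appeal to the continuity of $t\mapsto[t]$. The isomorphism $C^*_{\text{rad}}(G)\cong C^*(G)/\ker\pi_*$, which the paper leaves to the preceding discussion, is adequately covered by your fiberwise Plancherel argument together with the identity $\ker\pi_*=\bigcap_{t\in(-1,1)\setminus\sigma_r}\ker\pi_t$ already recorded in the paper.
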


\begin{proof}
  Let $f: \Delta \to \C$ be a continuous function.
  Since $\ker \pi \in \overline{\{\ker \pi_t\}}$ ($-1 < t < 1$),
  $f([t]) = f(\ker\pi)$ for $-1 < t < 1$ and then for $-1 \leq t \leq 1$ by continuity of $[t]$ in $t$. 
Thus $f$ is constant on $\Delta$ and 
  the assertion follows from the Dauns-Hofmann theorem (\cite{Pe} \S 4.4). 
\end{proof}

\end{document}